\documentclass{article}
\usepackage{graphicx} 
\usepackage{amssymb}
\usepackage{amsmath}
\usepackage{amsfonts}
\usepackage{amsthm}
\usepackage{stmaryrd}
\usepackage{xcolor}

\usepackage[utf8]{inputenc}
\usepackage{color}
\definecolor{note_fontcolor}{rgb}{0.800781, 0.800781, 0.800781}

\usepackage[letterpaper]{geometry}
\usepackage{float}
\usepackage{units}
\usepackage{xfrac} 
\usepackage{amstext}
\usepackage{setspace}
\PassOptionsToPackage{normalem}{ulem}
\usepackage{ulem}

\usepackage[unicode=true,pdfusetitle, bookmarks=true,bookmarksnumbered=false,bookmarksopen=false,
 breaklinks=false,pdfborder={0 0 1},backref=page,colorlinks=true]
 {hyperref}
\hypersetup{
 linkcolor=blue, citecolor=blue}

\title{Aldous-type Spectral Gaps in Generalized Symmetric Groups}
\author{Niv Levhari, Doron Puder}

\newtheorem{theorem}{Theorem}[section]    
\newtheorem{definition}[theorem]{Definition}
\newtheorem{example}[theorem]{Example}
\newtheorem{remark}[theorem]{Remark}
\newtheorem{lemma}[theorem]{Lemma}
\newtheorem{claim}[theorem]{Claim}
\newtheorem{corollary}[theorem]{Corollary}
\newtheorem{proposition}[theorem]{Proposition}
\newtheorem{conjecture}[theorem]{Conjecture}
\numberwithin{equation}{section}

\newcommand{\std}{\mathrm{std}_n}
\newcommand{\Irr}{\mathrm{Irr}}
\newcommand{\Rep}[1]{\text{Rep}\bigl({#1}\bigr)}
\newcommand{\Res}{\text{Res}}
\newcommand{\Irro}{\text{Irr}_0}
\newcommand{\Ind}{\text{Ind}}

\newcommand{\supp}{\mathrm{supp}}
\newcommand{\End}{\mathrm{End}}

\newcommand{\regSn}{\mathrm{Reg}_{S_n}}
\newcommand{\CSn}{\mathbb{C}[S_n]}
\newcommand{\CWn}{\mathbb{C}[W_n]}
\newcommand{\regWn}{\mathrm{Reg}_{W_n}}
\newcommand{\Par}{\mathcal{P}}

\newcommand{\ls}{\lambda_{\min}^*}
\newcommand{\lm}{\lambda_{\min}}

\newcommand{\set}[1]{\left\{{#1}\right\}}
\newcommand{\LL}{\mathcal{L}}

\newcommand{\gl}{\mathrm{GL}}
\newcommand{\triv}{\mathrm{triv}}
\newcommand{\JBG}{J_B^{(G)}}
\newcommand{\LBG}{\LL_\Gamma ^{(G)}}
\newcommand{\vm}{\vec{\mu}}

\newcommand{\U}{\mathrm{U}}
\newcommand{\vio}{v_{i_0}}

\begin{document}

\maketitle

\begin{abstract}
    We prove an analog of Aldous' spectral gap conjecture in the generalized symmetric groups $G\wr S_n$ where $G$ is an arbitrary finite group. 
    Moreover, we show that Caputo's extension of the conjecture to hypergraphs transfers to these groups whenever it holds in the ordinary symmetric group.
\end{abstract}

\tableofcontents{}    

\section{Introduction}\label{sec:intro}

Aldous’ spectral gap conjecture, formulated around 1992 and proved nearly two decades later by Caputo, Liggett and Richthammer \cite{CLR10}, concerns the equality of spectral gaps of the Laplacians of two fundamental Markov processes on a finite graph on $n\ge2$ vertices. 
The first process is the interchange process on a graph $\Gamma$, in which $n$ distinct
balls are placed at the vertices of $\Gamma$, one ball at every vertex. At every step of the process, one picks an edge of $\Gamma$ uniformly at random, and interchanges the two balls at the endpoints of the
chosen edge. The second process is a random walk with a single ball, which is located at one of the vertices. At each step, one
picks an edge of $\Gamma$ uniformly at random as before. If the ball sits at an endpoint of this edge, it is moved to the other endpoint, and otherwise it stays in its place. Although the first process has $n!$ states -- significantly more than the mere $n$ states of the second -- Aldous conjectured that both processes have identical spectral gaps. In \cite{CLR10} this conjecture was proven not only in this scenario but more generally for \textit{weighted} graphs.

This result has an equivalent formulation in the language of the representation theory of the symmetric group $S_n$, where the edges of the graph correspond to transpositions and the processes to certain representations of the group. (This was noted in \cite{Cesi2016Octopus}). Such a formulation suggests the following natural question: does this phenomenon have parallels in other groups? We should stress that it is not a priori clear what the precise generalization should be. In particular, what the analogs of transpositions may be.

This paper focuses on an analog in generalized symmetric groups. 
Throughout, fix a finite group $G$ and $n\ge 2$ and denote
\begin{equation} \label{eq:def of Wn}
    W_n:=G\wr S_n=G^n\rtimes S_n.
\end{equation}
One may think of $W_n$ as the group of \(n\times n\) monomial\footnote{A monomial matrix is a matrix having exactly one non-zero entry in every row and in every column.} matrices whose non-zero entries lie in \(G\) -- see Section \ref{subsec:W_n} for more details. 

Several prior works \cite{CesiWeylB,Ghosh23,AlonGhosh25} suggested analogs of Aldous' conjecture in several generalized symmetric groups with certain analogs for transpositions (as we elaborate in Subsection~\ref{subsec:related-results}). We present here a different generalization, inspired by the work of~\cite{AP26} on unitary groups. In our generalization, a very clean statement, analogous to Aldous' conjecture, holds. Moreover, we show that Caputo's extension of Aldous' conjecture to hypergraphs, while still open, holds in $W_n$ (independently of the finite group $G$) for a given hypergraph, if and only if it holds in the original setting of $S_n$ with the same hypergraph.

\subsection{The Aldous-Caputo conjecture in $S_n$}
We proceed to formally define the setting in the original problem in $S_n$. Although the original conjecture of Aldous considered graphs, the setting we present here is in the generality of hypergraphs which corresponds to the extension by Caputo. Let $\Gamma=([n],c)$ be a weighted \textbf{hypergraph} on $n$ vertices labelled $1,\ldots,n$ (we use the notation $[n]:=\{1,\ldots,n\}$), given by the assignment of a non-negative weight $c_B\in\mathbb{R}_{\ge0}$ to any hyperedge $B\subseteq [n]$. 

At every step of the interchange process or of the random walk, one picks a hyperedge $B\subseteq[n]$ with rate given by its weight $c_B$, and then chooses one of the $|B|!$ permutations in $S_B$ uniformly at random, where 
\[
    S_B:=\set{\sigma\in S_n\,\mid\,\supp\left(\sigma\right)\subseteq B}.
\]
In the interchange process we permute the $|B|$ balls sitting at the vertices of the selected hyperedge $B$ according to the chosen $\sigma\in S_B$. In the random walk, if the ball sits at vertex $i$, we move it to $\sigma(i)$.

Namely, we consider the following \textbf{averaging elements} in the group algebra $\CSn$:
\begin{equation} \label{eq:def of J_B}
    J_B\ :=\ \frac{1}{|S_B|}\sum_{\pi\in S_B}\pi,
\end{equation}
and the \textbf{hypergraph Laplacian element}\footnote{We use $e$ for the unit of a group.}
\begin{equation}\label{eq:def of laplacian}
    \LL_\Gamma\ :=\ \sum_{B\subseteq[n]} c_B\,\left(e-J_B\right)\in\CSn.
\end{equation}

Let $\tau\colon S_n\to \gl_d(\mathbb{C})$ be a finite-dimensional representation of $S_n$, where $\gl_d$ stands for invertible $d\times d$ matrices. The eigenvalues of $\tau(\LL_\Gamma)$ are real non-negative (see Corollary~\ref{cor: LBG is PSD}), and we define 
\begin{equation}\label{eq: lambda min}
    \lm(\Gamma,\tau) := \mathrm{the~smallest~ eigenvalue~of~}\tau\left(\LL_\Gamma\right).
\end{equation}
Each finite-dimensional representation of $S_n$ is uniquely decomposed (up to isomorphism) to irreducible ones $\tau\cong\oplus_i\tau_i$. The spectrum of $\tau(\LL_\Gamma)$ is the union of the spectra of $\tau_i(\LL_\Gamma)$.\footnote{\label{footnote: rep decomposition}With a suitable change of basis, $\tau(\LL_\Gamma)$ is a block-diagonal matrix, with each block corresponding to $\tau'(\LL_\Gamma)$ for some irreducible $\tau'$ in the decomposition of $\tau$.} We denote by $\ls(\Gamma,\tau)$ the smallest eigenvalue which is \textit{not} associated to the trivial summands in this decomposition. Namely, let $\tau\cong \tau'\oplus k\cdot\triv$ for some $k\in\mathbb{Z}_{\ge0}$, where $\tau'$ has no trivial component. Then
\begin{equation}\label{eq: lambda star}
    \ls(\Gamma,\tau) := \mathrm{smallest~ eigenvalue~of~}\tau'\left(\LL_\Gamma\right).
\end{equation}
(If $\tau$ has only trivial components, then $\ls(\Gamma,\tau)=\infty$.)

Let $\std$ denote the standard $S_n$-representation, that is, the non-trivial irreducible $(n-1)$-dimensional representation contained in the permutation representation of $S_n$ acting on $[n]$, and let $\mathrm{Reg}_{S_n}$ be the left-regular representation of $S_n$ (its action by multiplication from the left on $\mathbb{C}[S_n]$). The regular representation contains all of the irreducible representations of $S_n$ in its decomposition, so the spectrum of $\regSn(\LL_\Gamma)$ contains the spectrum of $\std(\LL_\Gamma)$ (as multisets). In particular, for any hypergraph $\Gamma$ as above, $\ls(\Gamma,\regSn)\le \lm(\Gamma,\std)$. Aldous' conjecture states that the two are equal when $\Gamma$ is a graph:

\begin{theorem}[\cite{CLR10}, Aldous' Conjecture]\label{thm:CLR}
    For all graphs\footnote{The hypergraph $\Gamma$ is a graph when $c$ is supported on pairs.} \(\Gamma=([n],c)\)
    \[
        \ls\left(\Gamma,\regSn\right) = \lm\left(\Gamma,\std\right).
    \]
\end{theorem}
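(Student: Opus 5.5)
The plan follows the Caputo--Liggett--Richthammer strategy: induction on $n$ combined with the \emph{octopus inequality}. The upper bound $\ls(\Gamma,\regSn)\le\lm(\Gamma,\std)$ is already noted above, so only the reverse inequality needs proof. Since $\regSn$ contains every irreducible, it suffices to show that for every non-trivial irreducible $\tau=\tau_\lambda$ of $S_n$ we have $\lm(\Gamma,\tau)\ge\lm(\Gamma,\std)$. I would argue by induction on $n$, with the base case $n=2$ trivial because $\std$ is then the only non-trivial irreducible.

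For the inductive step, the first task is to reduce the vertex set. Electric network reduction removes vertex $n$ and produces a weighted graph $\Gamma^{(n)}$ on $[n-1]$. It has weights $c'_{ij}=c_{ij}+c_{in}c_{jn}/\sum_k c_{kn}$, and it satisfies $\lm(\Gamma^{(n)},\std_{n-1})\ge\lm(\Gamma,\std)$. This comes from the interlacing/Schur-complement property of the one-particle random walk: the harmonic extension at vertex $n$ shows the reduced Dirichlet form dominates. I would establish this by a direct quadratic-form computation on functions on $[n]$.

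The second task is the comparison inequality
\[
\LL_\Gamma\ \ge\ \LL_{\Gamma^{(n)}}\qquad\text{as operators on } \mathbb{C}[S_n]
\]
in the PSD order, viewing $\LL_{\Gamma^{(n)}}\in\mathbb{C}[S_{n-1}]\subset\CSn$. The difference is a sum over $n$ of star-shaped terms. Each term reduces to the \emph{octopus inequality}: for a star with center $n$ and weights $c_i$,
\[
\sum_i c_i\,(e-(i\,n)) \;\ge\; \sum_{i<j}\frac{c_ic_j}{\sum_k c_k}\,(e-(i\,j)).
\]
This is the main obstacle. I would try to prove it as in CLR: decompose $\CSn$ by the action of $S_{n-1}$ restricted to the position of ball $n$ and write a quadratic form in the group algebra. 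Then the problem reduces to showing nonnegativity of a certain element, which is done by checking it in each irreducible via a clever sum-of-squares/positivity decomposition (alternatively, Cesi's proof via the representation-theoretic "octopus" argument using Young's orthogonal form restricted to $S_{n-1}$).

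Given these two tasks, consider an irreducible $\tau_\lambda$ with $\lambda\ne(n),(n-1,1)$. Its restriction to $S_{n-1}$ contains only non-trivial irreducibles, since $(n-1)$ appears in the restriction only for $\lambda\in\{(n),(n-1,1)\}$. By the comparison inequality and induction,
\[
\lm(\Gamma,\tau_\lambda)\ge\lm(\Gamma^{(n)},\Res\,\tau_\lambda)\ge\lm(\Gamma^{(n)},\std_{n-1})\ge\lm(\Gamma,\std),
\]
which completes the induction. The case $\lambda=(1^n)$ and similar ones are covered as well, since only the trivial summand must be avoided.
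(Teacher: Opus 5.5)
The paper does not prove this statement at all---it is quoted from \cite{CLR10}---and your outline is the representation-theoretic form of the Caputo--Liggett--Richthammer argument, with sound logic: network reduction with $\lm(\Gamma^{(n)},\mathrm{std}_{n-1})\ge\lm(\Gamma,\std)$, the octopus inequality $\LL_\Gamma\succeq\LL_{\Gamma^{(n)}}$ in $\CSn$, and the branching rule to exclude the trivial $S_{n-1}$-summand. Be aware that essentially all the difficulty lies in the octopus inequality, which you are in effect citing from \cite{CLR10} (or Cesi) rather than proving; also, $\LL_\Gamma-\LL_{\Gamma^{(n)}}$ is a single star centered at $n$, not a sum of such terms, and the case of an isolated vertex $n$ (where $\lm(\Gamma,\std)=0$) needs separate mention.
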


Caputo conjectured that the same holds for any hypergraph:

\begin{conjecture}[Caputo Conjecture]\label{conj:Caputo}
    For all hypergraphs $\Gamma=([n],c)$,
    \[
    \ls\left(\Gamma,\regSn\right) = \lm\left(\Gamma,\std\right)
    \]
\end{conjecture}

This conjecture appeared in~\cite[Conj.~3]{Piras2010},~\cite[p.~301]{Cesi2016Octopus}, and~\cite[p.~78]{AldousCaputoDurrettHolroydJungPuha2021Liggett}. Non-trivial special cases are proven in \cite{ALON_KOZMA_PUDER_2025} and \cite[Theorem~1.8]{bristiel2024entropy}.

Denote by ${\Par}_n$ the set of partitions of $n$ (equivalently, Young diagrams with $n$ boxes), and let $\Par=\sqcup_{n\ge0}{\Par}_n$ be the set of finite integer partitions.
Equivalently, recalling the indexing of irreducible representations of $S_n$ by the partitions in $\Par_n$, and denoting the representation corresponding to $\mu\vdash n$ by $\tau_\mu$, Theorem~\ref{thm:CLR} and Conjecture~\ref{conj:Caputo} state that the minimal non-trivial eigenvalue of the Laplacian appears in $\tau_{(n-1,1)}$, the irreducible representation corresponding to the Young diagram with $n-1$ boxes in the first row.

\subsection{Hypergraph distributions in $W_n=G\wr S_n$}

Recall that \(G\) is a fixed but arbitrary finite group. We explore the Aldous phenomena in the ``$G$-generalized" symmetric groups, the wreath products \(W_n=G\wr S_n\). We will shortly associate a ``hypergraph Laplacian element" in the group algebra $\mathbb{C}[W_n]$ to every hypergraph $\Gamma=([n],c)$, analogously to \eqref{eq:def of laplacian}. Our results show that for any $\Gamma$, the associated spectral gap of the regular representation of $W_n$ is identical to the associated spectral gap of the regular representation of $S_n$, and that if Caputo's conjecture holds for $\Gamma$, then an analogous statement holds for the same hypergraph in $W_n$. In particular, the analog of Theorem \ref{thm:CLR} is true in $W_n$.

\medskip
Recall that one may think of the group \(W_n=G\wr S_n=G^n\rtimes S_n\) as the group of \(n\times n\) monomial matrices whose non-zero entries lie in \(G\). For \(B\subseteq[n]\) define \(W_B\leq W_n\) to be the subgroup consisting of matrices that are identical to the identity matrix outside the $|B|\times|B|$ minor given by $B$. Namely, these are matrices which have the identity element $e_G$ of $G$ at every diagonal entry $(i,i)$ with $i\notin B$.

\begin{example}
    Let \(n=5\) and \(B=\{1,2,4\}\), then
    \[
        W_B = \left\{ \begin{pmatrix}
        
        * & * &  & * &  \\
        * & * &  & * &  \\
           &  & e_G &  &  \\
        * & * &  & * &  \\
           &  &  &  & e_G
          \end{pmatrix} \right\}\leq W_n.
    \]

\end{example}

\medskip
\noindent We can now generalize our definitions to the wreath product.

\begin{definition}[Wreath product hypergraph Laplacian]\label{def:Wn-hyper-graph} 
    For $B\subseteq[n]$ define
    \begin{equation}\label{eq: def of wreath J_B}
        \JBG\ :=\ \frac{1}{|W_B|}\sum_{g\in W_B}g \ \in\ \mathbb{C}[W_n].
    \end{equation}
    For a weighted hypergraph $\Gamma=([n],c)$ define the wreath product hypergraph Laplacian by
    \begin{equation}\label{eq: def of wreath laplacian}
        \LBG\ :=\ \sum_{B\subseteq[n]} c_B\,\left(e-\JBG\right)\ \in\ \mathbb{C}[W_n].  
    \end{equation}
\end{definition}

\subsection{Main results}\label{subsec: main results}
Let \(\Irr(G)\) be the set of (equivalence classes of) irreducible representations of \(G\). 
As we expand on in Section~\ref{sec:prelims}, irreducible \(W_n\)-modules are indexed by functions 
\[
    \vec{\mu}:\Irr \left(G\right)\to \Par ~~~\text{ s.t. } \sum_{\theta\in\Irr\left(G\right)}|\vec{\mu}\left(\theta\right)|=n,
\] 
which we encourage the reader to think of as $\Irr (G)$--indexed Young diagrams. The notation $|\vec\mu(\theta)|$ refers to the size of the partition (or Young diagram) corresponding to $\theta$, namely $|\vec\mu(\theta)|=m$ if and only if $\mu(\theta)\in\Par_m$. We also write $|\vec\mu|$ for the sum $\sum_{\theta\in\Irr(G)}|\vec{\mu}(\theta)|$. We sometimes refer to these functions as \textbf{$\Irr(G)$-indexed multi-partitions of order $n$}.

We denote by \(\rho_{\vec{\mu}}\) the $W_n$-irreducible representation\footnote{We use $\tau$ for representations of $S_n$ and $\rho$ for those of $W_n$ to avoid confusion, unless a representation of $W_n$ is a lift of a representation from $S_n$, and then we may refer to it by $\tilde\tau$.} corresponding to such 
\(\vec{\mu}\). 
The \textbf{support} of $\vec\mu$ is 
\begin{equation}\label{eq: def of support of multipartition}
    \supp (\vec \mu) := \set{\theta\in\Irr \left(G\right) :\vec\mu\left(\theta\right)\neq\emptyset}.
\end{equation}

We denote the set of $W_n$ irreducible representations supported on the trivial irreducible representation of $G$ by
\begin{equation}\label{eq: def of triv supported irreps}
    \Irro \left(W_n\right) := \set{\rho_{\vec\mu}:\supp\left(\vec\mu\right)=\set{\mathrm{triv}_G}}.
\end{equation}
\noindent
Since there is an epimorphism $\pi\colon W_n\to S_n$ (see~\eqref{eq: def of pi}), every (irreducible) representation $\tau$ of $S_n$ \textbf{lifts} to an (irreducible) representation of $W_n$ given by $\tilde{\tau}:=\tau\circ\pi$.
The following claim is straightforward from the construction of the irreducible representations of $W_n$, mentioned in Subsection~\ref{subsec:Wn-indexing}:
\begin{claim}\label{claim:triv}
    The representations in $\Irro(W_n)$ are the lifts of the representations in $\Irr(S_n)$.
    Specifically, if $\mu\vdash n$ and $\tau_\mu\in\Irr(S_n)$ is the corresponding representation, it lifts to $\rho_{\vm}\in\Irro(W_n)$ with $\vm(\triv_G)=\mu$, namely, $\tilde{\tau_\mu}=\rho_{\vm}$. Moreover, the hypergraph Laplacian elements are equal in such representations:
    \[
        \tau_{\mu}\left(\LL_\Gamma\right) = \rho_{\vm}\left(\LBG\right)
    \]
\end{claim}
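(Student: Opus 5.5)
The claim has two parts: identifying the lifts with $\Irro(W_n)$, and comparing the Laplacians. I will prove them in that order.

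\textbf{Identifying the lifts.} I would use the standard Clifford-theoretic construction of $\rho_{\vm}$ recalled in Subsection~\ref{subsec:Wn-indexing}. Given $\vm$, set $n_\theta=|\vm(\theta)|$. One takes the $G^n$-representation $\bigotimes_\theta \theta^{\otimes n_\theta}$ and extends it to its stabilizer $G\wr\prod_\theta S_{n_\theta}$. One tensors with $\bigotimes_\theta \tau_{\vm(\theta)}$ (pulled back along the projection onto $\prod_\theta S_{n_\theta}$), and then induces up to $W_n$.

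When $\supp(\vm)=\set{\triv_G}$, we have $n_{\triv_G}=n$ and the stabilizer is all of $W_n$. The $G^n$-representation is trivial, so no induction occurs, and $\rho_{\vm}$ is just $\tau_\mu\circ\pi$ with $\mu=\vm(\triv_G)$. In particular $\tilde{\tau_\mu}=\rho_{\vm}$. Since $\mu\mapsto\vm$ with $\vm(\triv_G)=\mu$ is a bijection $\Par_n\to\{\vm:\supp(\vm)=\set{\triv_G}\}$, the lifts of $\Irr(S_n)$ are exactly $\Irro(W_n)$. Distinct $\tau_\mu$ give distinct lifts, because $\pi$ is surjective and hence $\tilde\tau$ determines $\tau$.

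\textbf{Comparing the Laplacians.} By linearity it suffices to show $\rho_{\vm}(\JBG)=\tau_\mu(J_B)$ for each $B\subseteq[n]$, since $\rho_{\vm}(e)=\tau_\mu(e)=I$. Recall that $\rho_{\vm}=\tau_\mu\circ\pi$. The key observation is that $\pi$ restricts to a surjective homomorphism $W_B\to S_B$. Indeed, a monomial matrix that is the identity outside the $B$-minor has underlying permutation supported in $B$. Conversely, every $\sigma\in S_B$ lifts to the permutation matrix with entries $e_G$, which lies in $W_B$. The kernel of this restriction is $G^B$, so every fiber of $\pi|_{W_B}$ has size $|G|^{|B|}=|W_B|/|S_B|$. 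Therefore
\[
\rho_{\vm}(\JBG)=\frac{1}{|W_B|}\sum_{g\in W_B}\tau_\mu(\pi(g))=\frac{1}{|W_B|}\cdot\frac{|W_B|}{|S_B|}\sum_{\sigma\in S_B}\tau_\mu(\sigma)=\tau_\mu(J_B).
\]
Summing over $B$ with weights $c_B$ gives $\tau_\mu(\LL_\Gamma)=\rho_{\vm}(\LBG)$.

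\textbf{Main obstacle.} The only non-mechanical step is the first one. It relies on the precise form of the indexing in Subsection~\ref{subsec:Wn-indexing}, namely that in the trivially supported case the construction degenerates to a pullback through $\pi$, with no twist by $G$-characters. If the paper's convention differs (for example, an extra sign or a different choice of extension), I would check directly that $\rho_{\vm}$ restricted to $G^n$ is trivial. This holds because the isotypic component is $\triv_G^{\otimes n}$. A representation that is trivial on the normal subgroup $G^n$ factors through $W_n/G^n\cong S_n$, and comparing the $S_n$-factor with the data $\vm(\triv_G)=\mu$ then identifies it as $\tau_\mu$.
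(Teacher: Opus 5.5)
Your proposal is correct and follows essentially the same route as the paper. In the trivially supported case the construction degenerates: $W_{\vec m}=W_n$, there is no induction, and $\varrho$ is trivial because $V_\vartheta$ is one-dimensional and $\hat\sigma$ acts on it as the identity, so $\rho_{\vm}\cong\tilde\tau_{\vm(\triv_G)}$. The Laplacian identity is then proved exactly by your fiber count for $\pi|_{W_B}\colon W_B\to S_B$, whose fibers have size $|G|^{|B|}$.

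The only minor difference is in the reverse inclusion. The paper also records the equivalence with triviality of $\Res^{W_n}_{G^n}\rho_{\vm}$ via Lemma~\ref{lem:restriction-Gn-type}. You instead obtain that inclusion from the bijection $\mu\leftrightarrow\vm$ together with uniqueness in the classification, which suffices for the claim.
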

(In the equality, if $\tau_\mu\colon S_n\to\mathrm{GL}(V)$ for some finite-dimensional complex vector space $V$, we think of $\rho_{\vm}=\tilde{\tau_\mu}\colon W_n\to\mathrm{GL}(V)$ as a representation over the \textit{same} vector space $V$).
We prove this claim in Subsections~\ref{subsec:triv G^n reps} and~\ref{subsec: average properties}.

We reuse the notations $\lm(\Gamma,\rho)$ from~\eqref{eq: lambda min} and $\ls(\Gamma,\rho)$ from~\eqref{eq: lambda star} for a representation $\rho$ of $W_n$, when now the corresponding hypergraph Laplacian element is \eqref{eq: def of wreath laplacian}. For example, $\lm(\Gamma,\rho)$ is the smallest eigenvalue of $\rho(\LBG)$. Since the representation $\phi$ in the pair $(\Gamma,\phi)$ determines the ambient group, no confusion should arise with \eqref{eq: lambda min} and \eqref{eq: lambda star}.

As we mentioned in Footnote \ref{footnote: rep decomposition}, each finite-dimensional representation of $W_n$ is uniquely decomposed to irreducible ones $\rho\cong\oplus_i\rho_i$, and the spectrum of $\rho(\LBG)$ is the union of spectra of $\rho_i(\LBG)$. Our main result is that the smallest non-trivial eigenvalue of the regular representation of $W_n$, denoted $\regWn$, is always associated to some representation lifted from $S_n$.

\begin{theorem}[Main result]\label{thm:main}
    For every finite \(G\) and every hypergraph \(\Gamma=([n],c)\) with $n\ge2$ and a non-negative weight function $c$,
    \[
        \ls\left(\Gamma, \regWn\right) = \ls\left(\Gamma, \regSn \right).
    \]
\end{theorem}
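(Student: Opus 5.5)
The plan is to prove the two inequalities separately. The inequality $\ls(\Gamma,\regWn)\le\ls(\Gamma,\regSn)$ should be formal. Every irreducible $\tau_\mu$ of $S_n$ with $\mu\neq(n)$ lifts, by Claim~\ref{claim:triv}, to a non-trivial irreducible $\rho_{\vm}\in\Irro(W_n)$ with $\rho_{\vm}(\LBG)=\tau_\mu(\LL_\Gamma)$. Since $\regWn$ contains every irreducible of $W_n$, its non-trivial spectrum contains that of $\regSn$. The real content is the reverse inequality. For that I must show: for every irreducible $\rho_{\vm}$ with $\supp(\vm)\neq\{\triv_G\}$, every eigenvalue of $\rho_{\vm}(\LBG)$ is at least $\ls(\Gamma,\regSn)$. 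The irreducibles in $\Irro(W_n)$ are already covered by Claim~\ref{claim:triv}.

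\textbf{Step 1 (factorization of the averaging element).} Write $W_B=G^B\rtimes S_B$, and let $P_B:=\frac{1}{|G|^{|B|}}\sum_{h\in G^B}h$ and $J_B\in\mathbb{C}[S_n]\subseteq\mathbb{C}[W_n]$ as in~\eqref{eq:def of J_B}. Since $S_B$ normalizes $G^B$, the element $J_B$ commutes with $P_B$, and $\JBG=J_BP_B=P_BJ_B$. In any unitary representation $\rho$ both $\rho(J_B)$ and $\rho(P_B)$ are commuting orthogonal projections. Hence $0\le\rho(\JBG)\le\rho(J_B)$ and also $\rho(\JBG)\le\rho(P_B)$, in the Loewner order.

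\textbf{Step 2 (comparison with the restriction to $S_n$).} Summing the first bound with the weights $c_B$ gives
\[
\rho(\LBG)\ \ge\ \rho|_{S_n}(\LL_\Gamma)\qquad\text{and}\qquad \rho(\LBG)\ \ge\ \sum_B c_B\bigl(e-\rho(P_B)\bigr).
\]
Now decompose $V=U\oplus U^\perp$, where $U$ is the space of $S_n$-invariant vectors. On $U^\perp$ the restriction $\rho|_{S_n}$ has no trivial summand, and $\rho|_{S_n}(\LL_\Gamma)$ preserves this decomposition and vanishes on $U$. So for $v=u+w$ with $u\in U$ and $w\in U^\perp$, the first bound yields
\[
\langle\rho(\LBG)v,v\rangle\ \ge\ \ls(\Gamma,\regSn)\,\|w\|^2,
\]
because every non-trivial irreducible of $S_n$ occurs in $\regSn$.

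\textbf{Step 3 (the $S_n$-invariant part; main obstacle).} It remains to control the $u$-direction; this is the step I expect to be hardest. I would use the Clifford-theoretic description of $\rho_{\vm}|_{G^n}$. It splits into isotypic pieces $V_f$ indexed by labelings $f\colon[n]\to\Irr(G)$ of type $\vm$. The projection $\rho(P_B)$ is the projection onto $\bigoplus_{f:\,f(B)\subseteq\{\triv_G\}}V_f$. Because $\supp(\vm)\neq\{\triv_G\}$, each $f$ has a nonempty set $T^c=\{i:f(i)\neq\triv_G\}$ of some fixed size $k\ge1$.

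On $\bigoplus_f V_f$, the operator $\sum_Bc_B(e-\rho(P_B))$ acts diagonally. On $V_f$ it is the scalar $\sum_{B\cap T^c\neq\emptyset}c_B$, and I must show this is large enough. The natural comparison is with the $S_n$-module $M=\mathbb{C}[\text{$k$-subsets of }[n]]$, i.e.\ the locations of the non-trivial labels. There the relevant operator is $\LL_\Gamma$ acting on $M$, whose non-trivial eigenvalues are at least $\ls(\Gamma,\regSn)$.

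Rather than splitting $u$ and $w$, I would bound the full quadratic form directly using $\rho(\JBG)=\rho(J_B)\rho(P_B)$. The idea is to write $\rho(P_B)v$ and track the action of $J_B$ only on the components with $f(B)$ trivial. This should reduce $\langle\rho(\LBG)v,v\rangle$ to the Laplacian of $\Gamma$ acting on $V$ viewed as an $S_n$-module twisted by a non-trivial character pattern. Concretely, I expect the relevant vectors to live in $\Ind_{S_{T}\times S_{T^c}}^{S_n}$ of a representation that is non-trivial on the $T^c$ factor or killed by $P_B$. Such a module has no $S_n$-invariant vector on which the form degenerates, and all its eigenvalues are then bounded below by $\ls(\Gamma,\regSn)$.

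If this twisted comparison fails, my fallback is the following. On $S_n$-invariant vectors use the second bound from Step 2, and combine it with the fact that $\ls(\Gamma,\regSn)\le\lm(\Gamma,\std)\le\lm(\Gamma,\text{perm}\ominus\triv)$. The cross terms between $U$ and $U^\perp$ would then be handled via convexity of the two lower bounds.
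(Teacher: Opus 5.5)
There is a genuine gap. Your Steps 1 and 2 are correct. In Step 3 you also correctly reduce the problem to the scalar $s_f:=\sum_{B\cap T^c\neq\emptyset}c_B$ by which $\sum_B c_B(I-\rho(P_B))$ acts on $V_f$. But you never prove $s_f\ge\ls(\Gamma,\regSn)$, and this step cannot be avoided. For instance, if $\theta$ is a non-trivial linear character of $G$ and $\vm(\theta)=(n)$, then $\rho_{\vm}$ is one-dimensional with $S_n$ acting trivially. So $V=U$ and $\rho(\LBG)$ is the scalar $\sum_{B\neq\emptyset}c_B$, and the theorem for this $\rho$ says precisely that this number is at least $\ls(\Gamma,\regSn)$.

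What is needed is an \emph{upper} bound on the $S_n$-gap, and none of your routes supplies one. The comparison with $M=\mathbb{C}[k\text{-subsets}]$ runs in the wrong direction: knowing that the non-trivial eigenvalues of $\LL_\Gamma$ on $M$ are $\ge\ls(\Gamma,\regSn)$ says nothing about $s_f$. The fallback chain $\ls(\Gamma,\regSn)\le\lm(\Gamma,\std)\le\lm(\Gamma,\mathrm{perm}\ominus\triv)$ is vacuous, since the last two coincide, and it never touches the sums $\sum_{B\ni i}c_B$. No convexity argument on cross terms can replace it either, because on $U$ the second bound of Step~2 is the only information you have.

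The missing idea is the paper's Proposition~\ref{prop:star}, a test-vector argument. In the standard representation, the vector $\vio=e_{i_0}-\frac1n\sum_j e_j$ is $S_B$-invariant whenever $i_0\notin B$, so those hyperedges contribute nothing to its Rayleigh quotient. Each $B\ni i_0$ contributes at most $c_B$, because $I-J_B$ is an orthogonal projection. Hence $\ls(\Gamma,\regSn)\le\lm(\Gamma,\std)\le\sum_{B\ni i_0}c_B$ for every $i_0$, and choosing $i_0\in T^c$ gives $s_f\ge\ls(\Gamma,\regSn)$. (Your $M$-idea could be salvaged the same way, using the centred indicator of $T^c$ as a test vector when $k<n$, but as written it is not an argument.)

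Once this is in place, your second bound from Step~2 alone finishes the proof on all of $V$. An irreducible that is not a lift has no non-zero $G^n$-fixed vector, so every $V_f$ has $T^c\neq\emptyset$, and therefore $\rho(\LBG)\succeq\min_i\sum_{B\ni i}c_B\cdot I$. The splitting $V=U\oplus U^\perp$ and the comparison $\rho(\LBG)\succeq\rho|_{S_n}(\LL_\Gamma)$ are then unnecessary. This is exactly the paper's Proposition~\ref{prop:gap from triv diagonal}.
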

\medskip

\noindent In fact, we prove a slightly stronger statement: A vertex $i\in[n]$ in a hypergraph $\Gamma$ is called \textit{almost-isolated}\label{page:almost-isolated} if $c_B=0$ whenever $i\in B\subsetneq[n]$ (we allow $c_{[n]}$ to be positive). If a hypergraph $\Gamma$ has no almost-isolated vertex, then for any $\rho\in\Irr (W_n)\setminus\Irro(W_n)$,
    \[
    \lm\left(\Gamma,\rho\right)> \ls\left(\Gamma, \regWn\right)
    \]
with \textit{strict} inequality.

\begin{corollary}[Reduction to Caputo’s conjecture]\label{cor:transfer}
    Let \(G\) and $\Gamma$ be as in Theorem~\ref{thm:main}. 
    If Conjecture~\ref{conj:Caputo} holds for $\Gamma$, then
    \[
    \ls\left(\Gamma,\regWn\right)=\ls\left(\Gamma,\regSn\right)=\lm\left(\Gamma,\std\right),
    \]
    where, as above, $\std$ denotes the standard $n-1$-dimensional irreducible representation of $S_n$.
\end{corollary}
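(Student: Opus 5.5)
The corollary follows by chaining Theorem~\ref{thm:main} with Conjecture~\ref{conj:Caputo}, which we assume holds for the given hypergraph $\Gamma$. The plan has three steps.

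\textbf{Step 1.} Apply Theorem~\ref{thm:main} to $G$ and $\Gamma$. This gives
\[
\ls\left(\Gamma,\regWn\right)=\ls\left(\Gamma,\regSn\right),
\]
which is the first equality.

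\textbf{Step 2.} The hypothesis that Conjecture~\ref{conj:Caputo} holds for $\Gamma$ says exactly that
\[
\ls\left(\Gamma,\regSn\right)=\lm\left(\Gamma,\std\right).
\]
This is the second equality.

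\textbf{Step 3.} Concatenating the two equalities yields the statement.

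There is one bookkeeping point. On the right-hand side, $\lm(\Gamma,\std)$ is computed with the $S_n$-Laplacian $\LL_\Gamma$, while the left-hand side uses $\LBG$. Both quantities are simply real numbers, so the chain of equalities is meaningful as written.

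For a formulation internal to $W_n$, note that by Claim~\ref{claim:triv} we have $\lm(\Gamma,\std)=\lm(\Gamma,\widetilde{\std})$. Here $\widetilde{\std}=\rho_{\vm}$ is the lift with $\vm(\triv_G)=(n-1,1)$, since $\std(\LL_\Gamma)=\widetilde{\std}(\LBG)$ as operators on the same space. So the minimal nontrivial eigenvalue of $\regWn(\LBG)$ is attained on the lift of the standard representation.

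There is no real obstacle here. All the substance lies in Theorem~\ref{thm:main}, which we are allowed to assume, so the argument is purely a transfer of equalities.
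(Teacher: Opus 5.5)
Your proposal is correct and matches the paper's argument: the corollary is just Theorem~\ref{thm:main} chained with the assumed instance of Conjecture~\ref{conj:Caputo} for $\Gamma$. Your extra remark, identifying $\lm(\Gamma,\std)$ with the eigenvalue on the lift of $\std$ via Claim~\ref{claim:triv}, is valid but not needed for the statement.
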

In particular, every verified instance of Caputo’s conjecture for \(S_n\) extends to \(W_n=G\wr S_n\):
\begin{corollary}[Verified instances]\label{cor:explicit-cases}
    Let \(G\) and $\Gamma$ be as in Theorem~\ref{thm:main}. Suppose one of the following holds:
    \begin{enumerate}
        \item \label{enu:concrete case 1} \textbf{Pairs and Singletons:} The weight function $c$ is supported on $|B|\le 2$.
        \item \textbf{$(n-1)$-tuples:} \label{enu: n-1}The weight function $c$ is supported on $|B|\geq n-1$.
        \item \textbf{Mean-field:} \label{enu:meanfield}The weights \((c_B)\) are such that $c_B$ depends only on \(|B|\), that is, for any $|A|=|B|$, $c_A=c_B$.
        \item \textbf{Tree-like:} \label{enu:tree-like}The weights \((c_B)\) are supported on hyperedges that can be removed as leaves inductively in a tree-inspired manner (see~\cite{ALON_KOZMA_PUDER_2025}, Section 7). 
        \item \textbf{\cite[Theorem~1.3]{ALON_KOZMA_PUDER_2025}-hypergraphs:}\label{enu: akp hypergraphs} There is some $B_0\subseteq[n]$ such that $c$ is supported on subsets $B$ containing $B_0$ with $|B\setminus B_0|\le2$.\footnote{So Case \ref{enu:concrete case 1} is the special case with $B_0=\emptyset$ of Case \ref{enu: akp hypergraphs}.}
    \end{enumerate}
    Then the analog of Caputo’s conjecture in $W_n$ holds for $\Gamma$, that is,
    \[
    \ls\left(\Gamma,\regWn\right)=\ls\left(\Gamma,\regSn\right)=\lm\left(\Gamma,\std\right).
    \]
\end{corollary}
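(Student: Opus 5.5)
This corollary follows once Theorem~\ref{thm:main} and Corollary~\ref{cor:transfer} are combined with the known cases of Conjecture~\ref{conj:Caputo} for $S_n$. Theorem~\ref{thm:main} gives $\ls(\Gamma,\regWn)=\ls(\Gamma,\regSn)$ for every $G$ and every hypergraph, and Corollary~\ref{cor:transfer} adds $\ls(\Gamma,\regSn)=\lm(\Gamma,\std)$ as soon as Conjecture~\ref{conj:Caputo} holds for $\Gamma$. So the whole argument consists of checking that, under each of the five hypotheses, Conjecture~\ref{conj:Caputo} is already known for $\Gamma$ in $S_n$.

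I would go through the cases in order.

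\textbf{Case~\ref{enu: akp hypergraphs}.} This is exactly \cite[Theorem~1.3]{ALON_KOZMA_PUDER_2025}, so there is nothing to add.

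\textbf{Case~\ref{enu:concrete case 1}.} It is the special case $B_0=\emptyset$ of Case~\ref{enu: akp hypergraphs}. One can also argue directly. A singleton has $S_B=\{e\}$, so $e-J_B=0$ and singletons contribute nothing to $\LL_\Gamma$. For a pair $B=\{i,j\}$, $e-J_B=\tfrac12(e-(i\,j))$, so $\Gamma$ is a weighted graph with weights $c_B/2$. Theorem~\ref{thm:CLR} then applies; the factor $1/2$ rescales both sides equally.

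\textbf{Case~\ref{enu:tree-like}.} I would cite the tree-like result of \cite[Section~7]{ALON_KOZMA_PUDER_2025}.

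\textbf{Case~\ref{enu:meanfield}.} When $c_B$ depends only on $|B|$, the element $\LL_\Gamma$ is a linear combination of the class sums $\sum_{|B|=k}J_B$, so it is central in $\CSn$. Hence it acts on each $\tau_\mu$ by a scalar, and $\tau_\mu(J_B)$ is the projection onto $S_B$-invariants. Taking traces, $\sum_{|B|=k}\tau_\mu(J_B)$ acts by the scalar $\binom nk\dim(\tau_\mu^{S_k})/\dim\tau_\mu$. The task is to show that this scalar is maximized over $\mu\neq(n)$ at $\mu=(n-1,1)$. By the branching rule, $\dim\tau_\mu^{S_k}$ counts standard Young tableaux of the skew shape $\mu/(k)$. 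Equivalently, the ratio $\dim(\tau_\mu^{S_k})/\dim\tau_\mu$ is the probability that a uniform standard Young tableau of shape $\mu$ has $1,\dots,k$ all in the first row. This is the step I expect to need the most care, if the mean-field case is not already explicitly recorded in the literature. I would try a monotonicity (dominance-order) argument: moving boxes into the first row can only raise this probability. I expect this case is in fact proven in \cite{ALON_KOZMA_PUDER_2025} or \cite{bristiel2024entropy}, and I would cite it.

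\textbf{Case~\ref{enu: n-1}.} If $c$ is supported on $|B|\ge n-1$, then in any $\tau_\mu$ with $\mu\neq(n)$, the element $J_{[n]}$ acts as $0$. The element $J_{[n]\setminus\{i\}}$ is the projection onto $S_{n-1}$-invariants, which is nonzero only for $\mu=(n-1,1)$. Therefore for every $\mu\notin\{(n),(n-1,1)\}$ the Laplacian acts by the scalar $\sum_B c_B$. That scalar is clearly the largest possible value, since every $e-J_B$ satisfies $0\le e-J_B\le e$. Hence the minimum over nontrivial $\mu$ is attained at $\std$.

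Combining each case with Corollary~\ref{cor:transfer} finishes the proof.
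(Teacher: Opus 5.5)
Your proposal is correct and follows the paper's route: the corollary is Corollary~\ref{cor:transfer} combined with the known $S_n$ instances of Conjecture~\ref{conj:Caputo}. Your direct arguments are the ones the paper gives: for pairs and singletons via Theorem~\ref{thm:CLR}, noting $e-J_B=0$ for $|B|\le1$ (including $B=\emptyset$), and for $(n-1)$-tuples via the branching rule. For the mean-field case the paper simply cites \cite[Theorem~1.8]{bristiel2024entropy}, so your unfinished dominance-order sketch is not needed.
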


Indeed, for $S_n$, Caputo’s conjecture is known for pairs~\cite{CLR10} and if $|B|\le1$ then $e-J_B=0$ in $\CSn$, so the weights of singletons and of the empty set do not alter the Laplacians.
When the weight function is supported on subsets $B$ of size $\ge n-1$, the spectrum of $\tau_\mu(\LL_\Gamma)$ is identically $\sum_B c_B$ whenever $\mu\ne(n),(n-1,1)$ (this is an easy consequence of the branching rule). Hence the smallest non-trivial eigenvalue must come from $\tau_{(n-1,1)}$ --  see~\cite[Section~7]{ALON_KOZMA_PUDER_2025} for more details. 
The mean-field Case~\ref{enu:meanfield} in $S_n$ is~\cite[Theorem~1.8]{bristiel2024entropy}, the tree-like Case \ref{enu:tree-like} in $S_n$ is proven in~\cite[Section~7]{ALON_KOZMA_PUDER_2025}, and the last case is~\cite[Theorem 1.3]{ALON_KOZMA_PUDER_2025}. Corollary~\ref{cor:transfer} immediately implies the corresponding statements for $W_n$.

\begin{remark}\label{rem:tups}
    As mentioned above, Case~\ref{enu: n-1} of weight function supported on subsets of size $\ge n-1$ has a short and simple argument in $S_n$. However, this argument does not extend directly to $W_n$: there are irreducible representations $\rho$, other than $\tilde{\tau}_{(n-1,1)}$, with a non-trivial spectrum of $\rho(\LBG)$. In fact, these are precisely the irreducible representations of $W_n$ corresponding to $\vm$ with $\vm(\triv_G)=(n-1,1)$ (this is $\tilde{\tau}_{(n-1,1)}$) and those with $\vm(\triv_G)=(n-1)$ and $\vm(\theta)=(1)$ for some $\triv_G\ne\theta\in\Irr(G)$. Hence, Corollary~\ref{cor:transfer} is needed even for this case, which is fairly straightforward in the case of $S_n$.

    A similar issue arises regarding tree-like hypergraphs in Case \ref{enu:tree-like}. The proof in \cite{ALON_KOZMA_PUDER_2025} for $S_n$, while simple, does not transfer verbatim to $W_n$ either. However, Corollary~\ref{cor:transfer} proves it regardless.
\end{remark}

\subsection{Related works}\label{subsec:related-results}
Previous results in the wreath–product setting were hitherto only obtained in settings closer to the original \textit{graph} settings of Aldous, not in the more general hypergraph setting that we present. These results indicated that the nature of an Aldous phenomenon, when it exists, is highly dependent on the precise family of measures one considers. 

\paragraph{Notation.} The results presented in this subsection all share a similar structure, so we will use the following shared notations for all of them. 
Fix a finite group $G$. 
For a fixed $\triv_G\ne\chi\in\Irr(G)$, denote by $\rho_{\triv_G\mapsto \nu_1;\chi\mapsto \nu_2}$ the irreducible representation of $W_n=G\wr S_n$ corresponding to $\vm$ which maps $\triv_G\in\Irr(G)$ to the partition $\nu_1\in\Par$ and $\chi\in\Irr(G)$ to the partition $\nu_2\in\Par$ with $|\nu_1|+|\nu_2|=n$.
For a subset $B\subseteq [n]$ and an element $g\in G$, define $s_{g,B}\in G^n \le W_n$ to be the diagonal matrix with $g$ in the coordinates of $B$, and $e_G$ in the remaining coordinates ($[n]\setminus B$).
For a given $j\neq k\in[n]$, $(j~k)$ is the transposition (an ordinary permutation matrix).
For any $i,j,k\in[n]$, any $g\in G$ and any $B\subseteq[n]$, $a_{i},b_{jk},\alpha_B,c_g\in\mathbb{R}_{\ge0}$ are all non-negative weights, and $c_g=c_{g^{-1}}$.

\medskip

Cesi’s work in \cite{CesiWeylB} on the signed symmetric group \(C_{2}\wr S_{n}\) (here $C_2=\{\pm1\}$ is the group of order 2), establishes an exact spectral-gap identity for certain Cayley graphs. 
The measures he explores are of the form
\[
    \mathcal{A} \ := \ \sum_{i=1}^n a_i s_{\left(-1\right),\{i\}} + \sum_{1\le j<k\le n} b_{jk} \left(j~k\right).
\]

In Cesi's analysis, the representations governing the minimal eigenvalue of the Laplacian contain the lift of the standard representation as in Aldous' Conjecture, but they are not restricted to that representation alone: the gap arises from the permutation representation corresponding to the action of $C_2\wr S_n$ on $C_2\times[n]$, which decomposes as 
\[
\tilde\tau_{\left(n-1,1\right)}\oplus\rho_{\triv_{C_2}\mapsto\left(n-1\right);\mathrm{sign}\mapsto\left(1\right)}\oplus\triv_{C_2\wr S_n}.
\]
Moreover, both irreducible representations are needed, as is shown in~\cite[Section~5]{CesiWeylB}.

A similar phenomenon occurs in Ghosh’s Aldous-type theorem for the same class of groups we consider in the current paper, namely, for general finite $G$ and the wreath product \(W_n=G\wr S_{n}\) \cite{Ghosh23}. The measures considered there are of the form\footnote{In Ghosh's paper, the required assumption is $a_i>0$. We present the statement here with $a_i\ge0$, where the only difference is that the gap could be $0$ if one is not careful enough with the choice of $a_i$'s.}
\[
    \mathcal{B} \ := \ \sum_{i=1}^n a_i \sum_{g\in G} c_g s_{g,\set{i}} + \sum_{1\le j<k\le n} b_{jk}\left(j~k\right).
\]

The spectral gap in this setting is again controlled by a permutation representation, again arising from the natural action of $W_n$ on $G\times[n]$, given by\footnote{We define the notation $(\textbf{g};\sigma)$ explicitly in Section~\ref{sec:prelims}.} $(\textbf{g};\sigma).(h,i)=(\textbf{g}_{\sigma(i)}\cdot h,\sigma(i))$. This representation decomposes as the direct sum 
\[
    \triv_{W_n} \oplus \tilde\tau_{\left(n-1,1\right)} \oplus \bigoplus_{\triv_G\ne\chi\in\Irr \left(G\right)} \left(\dim(\chi)\right)\cdot\rho_{\triv_G\mapsto\left(n-1\right);\chi\mapsto\left(1\right)},
\]
as shown in \cite[Theorem 3.11]{Ghosh23}.\footnote{This can also be seen using the fact that the stabilizer of $(e_G,n)\in G\times[n]$ is $G\wr S_{n-1}$, induction from $\triv_{G\wr S_{n-1}}$ and the branching rule.}
This generalizes Cesi's work from $C_2$ to a general finite group $G$. (In this case, it was not shown that the spectral gap could arise from any irreducible representations in this decomposition.)

A recent preprint of Alon and Ghosh \cite{AlonGhosh25} gives another glimpse into the phenomena of \(C_{2}\wr S_{n}\), where in this case they allow the $s_{g,B}$ elements to take arbitrary subsets $B$, and not only singletons. These measures give a different generalization of Cesi's work, and are of the form
\[
    \mathcal{C} := \sum_{B\subseteq[n]}\alpha_B s_{\left(-1\right),B} +\sum_{1\le j<k\le n}b_{jk}\left(j~k\right).
\]

Yet again, the dominating representation contains the standard representation but is not equal to it. In this case, the dominating representation is in the collection
\[
\mathcal F_n=\set{\tilde\tau_{\left(n-1,1\right)}}\cup\set{\rho_{\triv_{C_2}\mapsto\left(n-m\right); \mathrm{sign}\mapsto \left(m\right)}\mid m\in[n]}.
\]
Note that this collection contains the irreducible representation from Cesi's work (taking $m=1$). They also show that any of these irreducible representations may induce the spectral gap.

Since the measures studied in \cite{CesiWeylB,Ghosh23,AlonGhosh25} differ from the ones considered here, their recurring pattern is more complex: it is attained not by the pure standard representation of \(S_{n}\), but by a higher-dimensional \(W_{n}\)-representation containing it. Our setting of the hypergraph measures (see~\eqref{eq: def of wreath laplacian}) leads to a cleaner statement, which we see as desirable. 
\medskip

Finally, as mentioned above, the setup of the current paper is inspired by the work \cite{AP26} on the unitary group $\U(n)$. That paper defines hypergraph measures on $\U(n)$, and the question is whether the spectral gap of the corresponding operator in the regular representation of $\U(n)$ is always obtained in the same few irreducible representations. Similarly, in line with our findings here, it is shown in \cite[Theorem 1.10]{AP26} that the spectral gap must always be obtained in some representation admitting a non-trivial subspace which is invariant under the diagonal subgroup of $\U(n)$. Discussion of further possible extensions of the Aldous phenomenon within the symmetric groups or in more general Coxeter groups can be found in \cite[$\S$5]{Cesi2016Octopus} and in \cite{PP18}.

\subsection{Notation and outline}
\paragraph{Notation.}
Throughout, $G$ is a finite group and $2\le n\in\mathbb{N}$. We write
\[
W_n := G\wr S_n = G^n \rtimes S_n
\]
as in~\eqref{eq:def of Wn}, with the action of $S_n$ permuting the $G^n$ coordinates in the usual way. We give an exact description of the elements in~\eqref{eq: def mat W_n} below. For a weighted\footnote{Our weights are always assumed to be non-negative.} hypergraph $\Gamma=([n],c)$, the wreath–product hypergraph Laplacian and its summands
\[
\LBG := \sum_{B\subseteq[n]} c_B\,\left(e-\JBG\right)\in\CWn
\]
are defined in~\eqref{eq: def of wreath J_B}-\eqref{eq: def of wreath laplacian}.

The (isomorphism classes of) irreducible representations of a finite group $G$ are denoted by $\Irr(G)$.

We use $d\cdot V$ for a vector space $V$ (respectively $d\cdot\theta$ for a representation $\theta$) to denote a direct sum of $d$ copies of $V$ (respectively of $\theta$).

Irreducible representations of $S_n$ are indexed by partitions $\mu\vdash n$; we denote the corresponding representation by $\tau_\mu$. The \emph{lift} of $\tau_\mu$ to $W_n$ is
\[
\tilde{\tau_\mu} := \tau_\mu\circ\pi,
\]
where $\pi:W_n\to S_n$ is the canonical projection.

Irreducible representations of $W_n$ are indexed by $\Irr(G)$–indexed multi-partitions $\vec\mu$ of order $n$ (see Subsection~\ref{subsec:Wn-indexing}); the corresponding irreducible representation is denoted $\rho_{\vec\mu}$. When $\supp(\vec\mu)=\{\triv_G\}$, we have $\rho_{\vec\mu}=\tilde{\tau_\mu}$ with $\mu=\vec\mu(\triv_G)$ (by Claim~\ref{claim:triv}).

\paragraph{Outline.}
The paper is organized as follows:
Section~\ref{sec:prelims} gives relevant background on wreath products and their representation theory, proves technical lemmas regarding irreducible representations of $W_n$ supported on $\triv_G$ and regarding the properties of hypergraph measures on $W_n$.
Section~\ref{sec:main} establishes the main result by bounding the Laplacian spectrum from above on an irreducible representation whose restriction to $G^n$ is trivial, and from below (with the same bound) on any irreducible representation whose restriction to $G^n$ is non-trivial.

\subsection*{Acknowledgments}
This work was supported by the European Research Council (ERC) under the European Union’s Horizon 2020 research and innovation programme (grant agreement No 850956), by the Israel Science Foundation, ISF grants 1140/23, as well as by the Kovner Member Fund at the IAS, Princeton.

\section{Preliminaries}\label{sec:prelims}

\subsection{The wreath product $W_n$} \label{subsec:W_n}
Above, we considered elements of $W_n=G\wr S_n=G^n\rtimes S_n$ as $n\times n$ monomial matrices with entries from $G\sqcup\{0\}$. It is also useful to write them as \((\textbf{g};\sigma)\in G^n\rtimes S_n\), with $\textbf{g}=(g_1,\ldots,g_n)\in G^n$ and $\sigma\in S_n$. The corresponding matrix $M_{\textbf{g},\sigma}\in\mathrm{Mat}_{n\times n}(G\sqcup{\set{0}})$ is given by
\begin{equation}\label{eq: def mat W_n}
[M_{\textbf{g},\sigma}]_{r,c}=\begin{cases}
    g_r & \mathrm{if~}r=\sigma\left(c\right) \cr
    0 & \mathrm{otherwise}.
\end{cases}
\end{equation}

The symmetric group $S_n$ embeds into $W_n$, with the embedding map given by $\sigma\mapsto (e_{G^n};\sigma)$.
The product $G^n$ embeds into $W_n$ through $\textbf{g}\mapsto(\textbf{g};e_{S_n})$. When writing the elements as $(\textbf{g};\sigma)$, the multiplication is given by 
\[
    \left(\textbf{g};\sigma\right)\cdot\left(\textbf{h};\tau\right)=\left(\textbf{g}\cdot\left(\textbf{h}\circ\sigma^{-1}\right);\sigma\tau\right),
\]
namely,
\begin{equation}\label{eq:wreath-mul}
    \left(g_1,\ldots,g_n;\sigma\right)\cdot\left(h_1,\ldots,h_n;\tau\right)\ =\ \left(g_1\,h_{\sigma^{-1}\left(1\right)},\ldots,g_n\,h_{\sigma^{-1}\left(n\right)};\ \sigma\tau\right).
\end{equation}
We shall use the following decomposition into factors in the wreath product $W_n$ repeatedly:
\begin{equation}\label{eq:wreath-fact}
    \left(\textbf{g};\sigma\right)=\left(\textbf{g};e_{S_n}\right)\left(e_{G^n};\sigma\right)=\left(e_{G^n};\sigma\right)\left(\textbf{g}\circ\sigma;e_{S_n}\right).
\end{equation}
The projection to $S_n$ is
\begin{equation}\label{eq: def of pi}
    \pi:W_n\to S_n,\qquad \pi\left(\textbf{g};\sigma\right)=\sigma,
\end{equation}
and its kernel is the normal subgroup $G^n\trianglelefteq W_n$.

\subsection{The Irreducible Representations of Wreath Products}\label{subsec:Wn-indexing}
We use the following basic representation-theoretic fact repeatedly: if $A$ and $B$ are finite groups, then every irreducible representation of $A\times B$ is an external tensor product $\alpha\boxtimes\beta$ with $\alpha\in\Irr (A)$ and $\beta\in\Irr (B)$, and $\alpha\boxtimes\beta\cong\alpha'\boxtimes\beta'$ if and only if $\alpha\cong\alpha'$ and $\beta\cong\beta'$.

Applied to $G^n=G\times\cdots\times G$, this fact implies that every irreducible representation $\vartheta$ of $G^n$ is (up to isomorphism) an external tensor product 
\begin{equation}
    \vartheta = \boxtimes_{i=1}^n \theta_i,\qquad\theta_i\in\Irr \left(G\right).
\end{equation} 

\subsubsection*{Indexing the irreducible representations of wreath products by multi-partitions}
In Subsection~\ref{subsec: main results} we mentioned that the irreducible representations of $W_n$ are classified by partition-valued functions $\vm\colon\Irr(G)\to\Par$ with $|\vm|=n$. We show one method to decode this indexing, roughly following \cite[Chapter 2.6]{ceccherini2014representation}. We use this decoding below in the lemmas leading to our main results. 
A slightly different construction of the representations of $W_n$ is given  by Zelevinsky in~\cite{Zelevinsky81}.

Fix a multi-partition $\vm\colon\Irr(G)\to\Par$ with $|\vm|=n$, and assume its support is $\supp(\vm) = \{\theta_1,\ldots,\theta_k\}$.\footnote{\label{footnote: theta order does not matter}The support is defined in~\eqref{eq: def of support of multipartition}. It is easy to see that the chosen order of the support does not matter for the resulting representation due to the induction step below.}  
We now construct a representation $\rho_{\vec\mu}$ corresponding to $\vec{\mu}$. 
Set $m_i  :=   |\vec{\mu}(\theta_i)|$ and $\vec m :=(m_1,\ldots,m_k)$. 
Denote 
\begin{equation}\label{eq: S_vec_m}
S_{\vec m}:=S_{\set{1,\ldots,m_1}}\times S_{\set{m_1+1,\ldots,m_1+m_2}} \times\cdots\times S_{\set{n-m_k+1,\ldots,n}}\le S_n,
\end{equation}
so $S_{\vec m}\cong S_{m_1}\times S_{m_2} \times\cdots\times S_{m_k}$.
Consider the irreducible $G^n$–representation
\begin{equation} \label{eq:vartheta}
\vartheta\ :=\ \underbrace{\theta_1\boxtimes\cdots\boxtimes\theta_1}_{m_1~\mathrm{times}}\boxtimes\cdots\boxtimes\underbrace{\theta_k\boxtimes\cdots\boxtimes\theta_k}_{m_k~\mathrm{times}}   
\end{equation}

and denote by $f:[n]\to\Irr(G)$ the function that maps $t\in[n]$ to the irreducible representation of $G$ corresponding to the $t^{\mathrm{th}}$ coordinate of $G^n$.
Note that $f=f\circ\sigma$ for any $\sigma\in S_{\vec m}$.
Let $V_i$ denote the vector space of the representation $\theta_i$, and 
let $$V_{\vartheta}=V_1^{\otimes m_1}\otimes \cdots\otimes V_k^{\otimes m_k}$$ be the vector space of the representation $\vartheta$. For any $\sigma\in S_{\vec m}$, we define a linear bijection $\hat\sigma:V_{\vartheta}\to V_{\vartheta}$ by permuting tensor coordinates according to $\sigma$.

Denote $W_{\vec m}:=G^n\rtimes S_{\vec m}\le W_n$ and define 
a $W_{\vec m}$-representation $\varrho$ on the same vector space $V_{\vartheta}$ by letting $G^n$ act component-wise and $S_{\vec m}$ act by the bijections $\hat{\sigma}$. One needs to check that this is indeed a homomorphism: see, for example,~\cite[Lemma 2.4.3]{ceccherini2014representation} for this computation.

For $\mu\vdash m$ let $\tau_\mu$ be the corresponding irreducible representation of $S_m$. 
Recall that $\vec{\mu}(\theta_i)\vdash m_i$ for any $i\in[k]$, and set
\[
\tau_{\vec\mu}\ :=\ \boxtimes_{i=1}^k \tau_{\vec\mu\left(\theta_i\right)}\ \in\ \Irr\left({S_{\vec m}}\right).
\]
By lifting along the projection $W_{\vec m}\twoheadrightarrow S_{\vec m}$, $\tilde\tau_{\vec\mu}$ is a representation of $W_{\vec m}$. We have now defined two representations of $W_{\vec m}$: $\varrho$ and $\tilde\tau_{\vec\mu}$. Define the $W_n$-representation corresponding to $\vm$ by
\begin{equation} \label{eq:define rho_vm by induction of representations}
    \rho_{\vec\mu}\ :=\ \Ind_{W_{\vec m}}^{W_n}\!\left(\varrho\otimes \tilde\tau_{\vec\mu}\right).
\end{equation}

\begin{theorem}[Classification of $\Irr(W_n)$, {\cite[Theorem~2.6.1]{ceccherini2014representation}}]\label{thm:Wn-classification}
Fix $n$ and a finite group $G$. For each multi-partition $\vm:\Irr(G)\to\Par$ with $|\vm|=n$,
define $W_{\vec m}\le W_n$, $\vartheta\in\Irr(G^n)$, $\varrho\in\Irr(W_{\vec m})$, and $\tau_{\vm}\in\Irr(S_{\vec m})$ as above, and set
\[
\rho_{\vm}:=\Ind_{W_{\vec m}}^{W_n}\left(\varrho\otimes \tilde\tau_{\vm}\right).
\]
Then $\rho_{\vm}$ is irreducible, $\rho_{\vm}\not\simeq\rho_{\vec\nu}$ for $\vm\neq\vec\nu$, and every irreducible
$W_n$--representation is isomorphic to $\rho_{\vm}$ for a unique $\vm$.
\end{theorem}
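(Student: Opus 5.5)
The plan is to prove the classification by Clifford theory (the ``little group'' method of Wigner and Mackey) applied to the normal subgroup $G^n\trianglelefteq W_n$ with quotient $S_n$.

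First, $W_n$ acts on $\Irr(G^n)$ by conjugation, and $S_n$ permutes the tensor factors: $\boxtimes_i\theta_i\mapsto\boxtimes_i\theta_{\sigma^{-1}(i)}$. Hence the orbits are indexed by the multiplicity functions $\theta\mapsto\#\{i:\theta_i=\theta\}$. Each orbit contains exactly one representative of the ordered form $\vartheta$ in \eqref{eq:vartheta}. The inertia group of this $\vartheta$ in $W_n$ is precisely $W_{\vec m}=G^n\rtimes S_{\vec m}$. This is because $\sigma$ fixes $\vartheta$ up to isomorphism iff $f\circ\sigma=f$, and the factors $\theta_1,\ldots,\theta_k$ are pairwise non-isomorphic.

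Second, I will verify that $\varrho$ is a genuine representation of $W_{\vec m}$ extending $\vartheta$. This is the computation cited as \cite[Lemma 2.4.3]{ceccherini2014representation}, which one can check directly using \eqref{eq:wreath-mul}: $\hat\sigma(\otimes v_t)\,$ intertwines $\vartheta$ with $\vartheta\circ\mathrm{conj}_\sigma$. Since $\vartheta$ extends to its inertia group, Gallagher's theorem applies. It says that the irreducibles of $W_{\vec m}$ lying over $\vartheta$ are exactly $\varrho\otimes\tilde\tau$ with $\tau\in\Irr(S_{\vec m})$, and that these are pairwise non-isomorphic. The irreducibles of $S_{\vec m}\cong\prod S_{m_i}$ are the products $\boxtimes\tau_{\mu_i}$ with $\mu_i\vdash m_i$, by the external tensor product fact together with the classification of $\Irr(S_m)$ by partitions. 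These products are exactly the $\tau_{\vm}$ for the multipartitions $\vm$ with the given $\vec m$ and the given support.

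Third, Clifford's correspondence says that induction from the inertia group gives a bijection between irreducibles of $W_{\vec m}$ over $\vartheta$ and irreducibles of $W_n$ over the orbit of $\vartheta$. So every $\rho_{\vm}$ is irreducible, and distinct $\vm$ sharing the same $\vartheta$ give non-isomorphic representations. Representations over different orbits are non-isomorphic, because their restrictions to $G^n$ contain disjoint sets of irreducibles by Mackey/Clifford. Finally, every irreducible $\rho$ of $W_n$ contains some irreducible of $G^n$ in its restriction, hence lies over some orbit and is covered.

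A multipartition $\vm$ determines the orbit (through the support and the sizes $m_i$) and the partitions $\mu_i$. The order of the support is irrelevant up to conjugation, as in Footnote~\ref{footnote: theta order does not matter}. Therefore the correspondence $\vm\mapsto\rho_{\vm}$ is a bijection onto $\Irr(W_n)$.

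The main obstacle is the Gallagher step, specifically establishing that $\varrho$ is a homomorphism (the extension exists). A cheaper alternative for the final counting is to compare $\sum_{\vm}(\dim\rho_{\vm})^2$ with $|W_n|=|G|^n n!$.
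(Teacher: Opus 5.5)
Your proposal is correct: the $S_n$-orbits on $\Irr(G^n)$, the inertia group $W_{\vec m}$, the extension $\varrho$ combined with Gallagher's theorem, and then the Clifford correspondence form the standard little-group argument. The paper gives no proof of its own beyond citing \cite[Theorem~2.6.1]{ceccherini2014representation}, which is proved by essentially this Clifford-theoretic route, and the paper likewise defers the homomorphism check for $\varrho$ to \cite[Lemma~2.4.3]{ceccherini2014representation}.
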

As we mentioned in Footnote~\ref{footnote: theta order does not matter}, changing the order we set for the support yields isomorphic irreducible representations of $W_n$. 

\subsection{$W_n$ representations supported on $\triv_{G}$}\label{subsec:triv G^n reps}
\begin{lemma}\label{lem:restriction-Gn-type}
Let $\vm:\Irr(G)\to\Par$ be a multi-partition with $|\vm|=n$. Define $\rho_{\vm}\in\Irr(W_n)$, $\vartheta\in\Irr(G^n)$, $\varrho\in\Irr(W_{\vec m})$, and $\tau_{\vm}\in\Irr(S_{\vec m})$ as above. The restriction of $\rho_{\vm}$ to $G^n$ satisfies
\[
    \Res^{W_n}_{G^n}\rho_{\vm} \cong \bigoplus_{\theta\in\Irr\left(G^n\right):\theta\sim\vartheta}\left(\dim \tau_{\vm}\right)\cdot\theta
\]
where $\theta\sim\vartheta$ if each irreducible representation of $G$ appears in the same number of coordinates in both representations $\theta$ and $\vartheta$.
In particular, the restriction of $\rho_{\vm}$ to $G^n$ is trivial if and only if $\supp(\vm)=\{\triv_G\}$.
\end{lemma}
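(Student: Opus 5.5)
We will compute the restriction with Mackey's formula, using that $G^n$ is normal in $W_n$ and that $W_n = G^n S_n$. Coset representatives for $W_n/W_{\vec m}$ can be taken to be permutations $\sigma$ running over $S_n/S_{\vec m}$. Since $G^n \le W_{\vec m}$, every double coset $G^n\backslash W_n/W_{\vec m}$ is a single left coset $\sigma W_{\vec m}$. Mackey's formula therefore gives
\[
\Res^{W_n}_{G^n}\Ind_{W_{\vec m}}^{W_n}\left(\varrho\otimes\tilde\tau_{\vm}\right)\cong\bigoplus_{\sigma\in S_n/S_{\vec m}} {}^{\sigma}\!\left(\Res^{W_{\vec m}}_{G^n}\left(\varrho\otimes\tilde\tau_{\vm}\right)\right),
\]
where ${}^\sigma$ denotes the conjugated representation $\textbf{g}\mapsto(\cdot)(\sigma^{-1}\textbf{g}\sigma)$ of $G^n$.

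\textbf{Inner restriction.} By construction, $G^n$ acts on $V_\vartheta$ through $\vartheta$ in $\varrho$. The lift $\tilde\tau_{\vm}$ is trivial on $G^n$, since $G^n$ is the kernel of $W_{\vec m}\to S_{\vec m}$. Hence $\Res_{G^n}(\varrho\otimes\tilde\tau_{\vm})\cong(\dim\tau_{\vm})\cdot\vartheta$.

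\textbf{Conjugation.} By \eqref{eq:wreath-fact}, conjugating $\textbf{g}$ by $\sigma$ permutes its coordinates. Consequently ${}^\sigma\vartheta$ is the external tensor product with the factors permuted, namely $\boxtimes_t \theta_{f(\sigma^{-1}(t))}$, up to the direction convention. As $\sigma$ ranges over $S_n/S_{\vec m}$, the function $f\circ\sigma^{-1}$ ranges bijectively over all functions $[n]\to\Irr(G)$ with the same fiber sizes $m_i$. This is because $S_{\vec m}$ is exactly the stabilizer of $f$, as noted after \eqref{eq:vartheta}. These functions are exactly the $\theta\sim\vartheta$. Distinct functions give non-isomorphic irreducible representations of $G^n$, by the external tensor product fact, so each $\theta\sim\vartheta$ appears exactly once before multiplying by $\dim\tau_{\vm}$. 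This yields the stated decomposition.

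\textbf{The ``in particular''.} The restriction is trivial if and only if every $\theta\sim\vartheta$ is the trivial representation of $G^n$. This happens if and only if $\vartheta$ itself is trivial, since $\vartheta\sim\vartheta$ and the trivial representation is the only one equivalent to itself. That in turn holds if and only if every coordinate is $\triv_G$, i.e.\ $\supp(\vm)=\{\triv_G\}$. Here we use that $\boxtimes\theta_i$ is trivial exactly when each $\theta_i$ is trivial, since the dimensions multiply and each factor is a restriction to a coordinate copy of $G$.

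The only delicate step is the conjugation bookkeeping. One must check that the left-coset representatives $\sigma$ produce each rearrangement of $f$ exactly once, and that the direction convention $\sigma$ versus $\sigma^{-1}$ does not matter for the final set. I expect this to be routine: the orbit of $f$ under $S_n$ acting by precomposition is the whole set of same-content functions, and its stabilizer is $S_{\vec m}$.
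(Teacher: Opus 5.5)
Your proof is correct and follows essentially the same route as the paper. The paper carries out your Mackey computation by hand: it writes $V_{\rho_{\vm}}=\bigoplus_{t\in\mathcal{T}}t\,(V_\vartheta\otimes V_{\tau_{\vm}})$ for a transversal $\mathcal{T}$ of $S_n/S_{\vec m}$, and uses $\textbf{g}\,t=t\,(\textbf{g}\circ t)$ to see that each summand is $G^n$-invariant and isomorphic to $(\dim\tau_{\vm})\cdot{}^{t}\!\vartheta$. Beyond that, you fill in two details the paper leaves to a footnote and to the reader: the orbit--stabilizer bookkeeping, and the argument for the ``in particular''. In the bookkeeping, note that the inclusion $\mathrm{Stab}(f)\subseteq S_{\vec m}$ needs that the $\theta_i$ in the support are distinct; the paper only records $f=f\circ\sigma$ for $\sigma\in S_{\vec m}$.
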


Note that there are precisely $\frac{|S_n|}{|S_{\vec m}|}=\frac{|W_n|}{|W_{\vec m}|}$ irreducible representations $\theta\in\Irr(G^n)$ with $\theta\sim\vartheta$.\footnote{By the orbit-stabilizer theorem, as $S_n$ acts on them by permuting the coordinates, and $S_{\vec m}$ stabilizes $\vartheta$.}

\begin{proof}
Denote $d:=\dim(\tau_{\vm})$. First, the restriction of $\tau_{\vm}$ to $G^n$ is trivial, and the restriction of $\varrho$ to $G^n$ is $\vartheta$. Hence, 
\[
\Res^{W_{\vec m}}_{G^n}\left(\varrho\otimes \tilde\tau_{\vm}\right)\ \cong\ \vartheta\otimes \left(d\cdot\triv\right)\cong\ d\cdot\vartheta.
\]

Fix a set of representatives $\mathcal{T}\subset S_n$ for the left cosets $S_n/S_{\vec m}$ and view it inside $W_n$ via
$t\mapsto (e_{G^n};t)$: then $\mathcal{T}$ is also a transversal for the left cosets of $W_{\vec m}$ inside $W_n$. Define $V_{\tau_{\vm}}$ as the vector space corresponding to $\tau_{\vm}$ (and $\tilde\tau_{\vm}$). By the definition of induction of representations, the vector space corresponding to $\rho_{\vm}$ is
$$V_{\rho_{\vm}}=\bigoplus_{t\in\mathcal{T}}t \left(V_{\vartheta}\otimes V_{\tau_{\vm}}\right),$$
where $t (V_{\vartheta}\otimes V_{\tau_{\vm}})$ is a copy of $V_{\vartheta}\otimes V_{\tau_{\vm}}$ and $w\in W_n$ acts by 
\begin{equation}\label{eq: action of W_n on irreps}
w.\left(t\left(v\otimes u\right)\right)=t'\left( w'. v\otimes w'.u\right)
\end{equation}
when $w t = t' w'$ with $w'\in W_{\vec m},t'\in\mathcal{T}$.
Specifically, for $\textbf{g}\in G^n\le W_n$ we have $\textbf{g}\cdot t=(\textbf{g};e_{S_n}) \cdot (e_{G^n}; t)=(e_{G^n};t)\cdot (\textbf{g}\circ t;e_{S_n})=t\cdot (\textbf{g}\circ t)$ by \eqref{eq:wreath-mul}. 
So 
\[
\textbf{g}.\left(t\left( v\otimes u\right)\right)=t \left(\left(\textbf{g}\circ t\right). v\otimes u\right).
\]
So each summand $t (V_{\vartheta}\otimes V_{\tau_{\vm}})$ is invariant under $G^n$, and isomorphic to the $G^n$-representation ${}^{t}\!\vartheta\otimes \left(d\cdot\triv\right)$, where ${}^{t}\!\vartheta$ denotes the $G^n$--representation obtained from $\vartheta$ by permuting tensor coordinates by $t$ as above, 
that is, ${}^{t}\!\vartheta(g)=\vartheta(g\circ t)$. Therefore,
\[
\Res^{W_n}_{G^n}\rho_{\vm}\ \cong\ \bigoplus_{t\in\mathcal{T}} \left(d\cdot{}^{t}\!\vartheta\right).
\]
We are done as the representations ${}^{t}\!\vartheta$ are precisely all the representations $\theta\in\Irr(G^n)$ with $\theta\sim\vartheta$.
\end{proof}

We can now prove the first part of Claim~\ref{claim:triv},
\begin{corollary}\label{cor:trivial-color=lift}
    Let $\vm:\Irr(G)\to\Par$ satisfy $|\vm|=n$. Then the following are equivalent:
    \begin{enumerate}
        \item \label{enu:supp=triv} $\rho_{\vm}\in\Irr_0(W_n)$, namely, $\supp(\vm)=\set{\triv_G}$.
        \item \label{enu:restriction=triv} The restriction of $\rho_{\vm}$ to $G^n$ is trivial.
        \item \label{enu:lift from Sn} $\rho_{\vm}\cong \tilde\tau_{\vm(\triv_G)}$.
    \end{enumerate}
\end{corollary}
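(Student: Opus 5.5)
The plan is to prove the cycle of implications \ref{enu:supp=triv}$\Rightarrow$\ref{enu:lift from Sn}$\Rightarrow$\ref{enu:restriction=triv}$\Rightarrow$\ref{enu:supp=triv}. Lemma~\ref{lem:restriction-Gn-type} already gives \ref{enu:restriction=triv}$\Leftrightarrow$\ref{enu:supp=triv}. Indeed, $\Res^{W_n}_{G^n}\rho_{\vm}$ is a sum of copies of the $\theta\sim\vartheta$, and $\vartheta=\triv_G^{\boxtimes n}$ exactly when $\supp(\vm)=\set{\triv_G}$, since $\vartheta$ contains $\theta_i$ in $m_i\ge1$ coordinates for every $\theta_i$ in the support. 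The implication \ref{enu:lift from Sn}$\Rightarrow$\ref{enu:restriction=triv} is immediate: $\tilde\tau=\tau\circ\pi$ and $G^n=\ker\pi$, so any lift restricts trivially to $G^n$.

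The remaining content is \ref{enu:supp=triv}$\Rightarrow$\ref{enu:lift from Sn}, which I would prove directly from the construction. If $\supp(\vm)=\set{\triv_G}$, then $k=1$ and $m_1=n$. Hence $S_{\vec m}=S_n$, $W_{\vec m}=W_n$, and the induction in \eqref{eq:define rho_vm by induction of representations} is trivial, so $\rho_{\vm}=\varrho\otimes\tilde\tau_{\vm}$ with $\tau_{\vm}=\tau_{\vm(\triv_G)}$.

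Next I would identify $\varrho$. Its space is $V_\vartheta=\mathbb{C}^{\otimes n}\cong\mathbb{C}$, on which $G^n$ acts trivially. Each $\hat\sigma$ permutes tensor factors of $\mathbb{C}^{\otimes n}$, so it sends $1\otimes\cdots\otimes1$ to itself and therefore acts as the identity. Thus $\varrho=\triv_{W_n}$, and $\rho_{\vm}\cong\triv_{W_n}\otimes\tilde\tau_{\vm(\triv_G)}\cong\tilde\tau_{\vm(\triv_G)}$.

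The main point of care is checking that $\hat\sigma$ acts trivially rather than by some sign. This is clear because $\hat\sigma$ is defined by permuting tensor coordinates, with no twisting, and every coordinate is the same vector in a one-dimensional space.

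An alternative route to \ref{enu:restriction=triv}$\Rightarrow$\ref{enu:lift from Sn} avoids the construction. If $\rho$ is trivial on $G^n$, it factors through $W_n/G^n\cong S_n$, so $\rho=\tilde\tau$ for some $\tau$, which is irreducible because $\pi$ is onto. One would then still need to match indices, so the direct computation above is preferable, since it pins down the partition $\vm(\triv_G)$ explicitly.
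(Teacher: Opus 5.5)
Your proposal is correct and follows essentially the same route as the paper: the equivalence of the first two conditions comes from Lemma~\ref{lem:restriction-Gn-type}, a lift restricts trivially because $G^n=\ker\pi$, and for the remaining implication $W_{\vec m}=W_n$, so the induction is trivial and $\varrho$ is the trivial representation on the one-dimensional space $V_\vartheta$, giving $\rho_{\vm}\cong\triv_{W_n}\otimes\tilde\tau_{\vm(\triv_G)}$. Your explicit check that $\hat\sigma$ acts without a sign is a detail the paper leaves implicit.
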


\begin{proof}
    The equivalence \eqref{enu:supp=triv}$\Longleftrightarrow$\eqref{enu:restriction=triv} is part of Lemma~\ref{lem:restriction-Gn-type}.     The \eqref{enu:lift from Sn} $\Longrightarrow$ \eqref{enu:restriction=triv} implication is by definition as $G^n$ is the kernel of the projection $\pi:W_n \twoheadrightarrow S_n$ and $\tilde{\tau}_{\vm(\triv_G)}=\tau_{\vm(\triv_G)}\circ\pi$.
    
    Finally, we show that \eqref{enu:supp=triv} $\Longrightarrow$ \eqref{enu:lift from Sn}. By the discussion above, in this case $W_{\vec m}=W_n$ and the induction defining $\rho_{\vm}$ in \eqref{eq:define rho_vm by induction of representations} is trivial. In addition, the action of $S_{\vec m}=S_n$ on $\varrho$ is trivial (as the entire vector space $V_\vartheta$ is one-dimensional) and the action of $G^n$ on $\varrho$ is also trivial by definition. So $W_n$ acts trivially on $\varrho$. Thus,
    \[
    \rho_{\vm}=\Ind_{W_n}^{W_n}\!\left(\varrho\otimes \tilde\tau_{\vec\mu}\right)=\triv_{W_n}\otimes \tilde\tau_{\vm\left(\triv_G\right)}\cong \tilde\tau_{\vm\left(\triv_G\right)},
    \]
    where, as above, $\tau_\mu$ is the $S_n$-irreducible representation corresponding to the partition $\mu$, which becomes the $W_n$-representation $\tilde\tau_\mu$ by lifting. 
\end{proof}

Furthermore, it turns out that the representations from Corollary \ref{cor:trivial-color=lift} are the only irreducible representations that have non-trivial $G^n$-invariant subspaces.
\begin{lemma}\label{lemma:triv isotypic of G^n comes from lifts}
    Let $\rho:W_n\to GL(V)$ be any finite-dimensional $W_n$-representation and consider its restriction to $G^n$. Denote by $V_\theta$ the isotypic component corresponding to $\theta\in\Irr(G^n)$, so $V=\bigoplus_{\theta\in\Irr (G^n)}V_\theta$. Then $V_{\triv_{G^n}}$ is a sub-representation of $\rho$.
\end{lemma}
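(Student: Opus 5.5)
The plan is to identify $V_{\triv_{G^n}}$ with the space of $G^n$-fixed vectors, $V^{G^n}=\set{v\in V: \rho(\textbf{g})v=v \ \forall \textbf{g}\in G^n}$, and to show that this space is preserved by all of $W_n$, using only that $G^n\trianglelefteq W_n$ is normal (it is the kernel of $\pi$ in \eqref{eq: def of pi}).

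First I will check that the isotypic component of the trivial representation is exactly $V^{G^n}$. Every vector in a copy of $\triv_{G^n}$ is fixed by $G^n$, so $V_{\triv_{G^n}}\subseteq V^{G^n}$. Conversely, any nonzero $v\in V^{G^n}$ spans a one-dimensional trivial subrepresentation of $\Res^{W_n}_{G^n}\rho$, so it lies in the trivial isotypic component. Alternatively, one can use the projection $P=\frac{1}{|G^n|}\sum_{\textbf{g}\in G^n}\rho(\textbf{g})$, which is the standard projection onto $V_{\triv_{G^n}}$ and has image $V^{G^n}$.

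Second, I will show invariance. Take $w\in W_n$, $v\in V^{G^n}$ and $\textbf{g}\in G^n$. Normality gives $w^{-1}\textbf{g}w\in G^n$, and therefore
\[
\rho(\textbf{g})\rho(w)v=\rho(w)\rho(w^{-1}\textbf{g}w)v=\rho(w)v .
\]
So $\rho(w)v\in V^{G^n}$, which means $V^{G^n}$ is $W_n$-invariant and hence a subrepresentation. An equivalent route is to note that $w P w^{-1}=P$ in $\CWn$, since conjugation by $w$ permutes $G^n$; then $P$ is a central-like idempotent commuting with $\rho(W_n)$, and its image is invariant.

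I do not expect a real obstacle here. The only point that needs care is the identification of the isotypic component with the fixed-point space, which is handled by the remark above. If needed for later use, I would also note that the subrepresentation $V_{\triv_{G^n}}$ factors through $\pi$. By Corollary~\ref{cor:trivial-color=lift}, it therefore decomposes into irreducibles in $\Irro(W_n)$.
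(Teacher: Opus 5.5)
Your proof is correct and is essentially the paper's argument. The paper notes that $S_n$ carries each isotypic component $V_\theta$ to $V_{\sigma.\theta}$ and that $\triv_{G^n}$ is fixed, so $V_{\triv_{G^n}}=V^{G^n}$ is $W_n$-invariant; your normality computation $\rho(\textbf{g})\rho(w)v=\rho(w)\rho(w^{-1}\textbf{g}w)v=\rho(w)v$ verifies exactly this, and your phrasing loses nothing needed for the lemma as stated.
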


\begin{proof}
    As $S_n$ permutes the coordinates of $G^n$, it carries $V_{\theta}$ to $V_{\sigma.\theta}$, where\linebreak
    $(\sigma.\theta)(a_1,\dots,a_n):=\theta(a_{\sigma^{-1}(1)},\dots,a_{\sigma^{-1}(n)})$ for $\sigma\in S_n$.
    In particular, the trivial representation of $G^n$, $\mathrm{triv}_{G^n}$ is fixed by all of $S_n$, so
    \[
        V_{\triv_{G^n}} = \left\{v\in V: \forall a\in G^n,~ a\cdot v=v\right\}
    \]
    is a $W_n$--invariant subspace on which $G^n$ acts trivially. 
\end{proof}

Note that using the notation of Lemmas \ref{lem:restriction-Gn-type} and \ref{lemma:triv isotypic of G^n comes from lifts}, the argument in the last proof shows that for every $\theta\in\Irr(G^n)$, the sum $\bigoplus_{\theta'\sim\theta}V_{\theta'}$ is a sub-representation of $\rho$.

From Corollary~\ref{cor:trivial-color=lift}, we know that $G^n$ acts trivially on an (entire) irreducible representation $\rho$ if and only if it is a lift of a representation of $S_n$, and along with Lemma~\ref{lemma:triv isotypic of G^n comes from lifts} this implies the following:
\begin{corollary}\label{cor:non lift irreps have no G^n trivial}
    Let $\rho:W_n\to GL(V)$ be an irreducible representation which is \textbf{not} a lift from $S_n$. Then its $G^n$-trivial part is $\{0\}$.
\end{corollary}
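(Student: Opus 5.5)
The statement is Corollary~\ref{cor:non lift irreps have no G^n trivial}, and I would derive it directly from Lemma~\ref{lemma:triv isotypic of G^n comes from lifts} combined with Corollary~\ref{cor:trivial-color=lift}. Let $\rho:W_n\to GL(V)$ be irreducible and not a lift from $S_n$. Decompose the restriction to $G^n$ into isotypic components, $V=\bigoplus_{\theta\in\Irr(G^n)}V_\theta$. Then $V_{\triv_{G^n}}$ is exactly the subspace of vectors fixed by every element of $G^n$. This is the ``$G^n$-trivial part'' in the statement.

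By Lemma~\ref{lemma:triv isotypic of G^n comes from lifts}, $V_{\triv_{G^n}}$ is a $W_n$-subrepresentation of $V$. Since $\rho$ is irreducible, $V_{\triv_{G^n}}$ is either $\{0\}$ or all of $V$. Suppose for contradiction that $V_{\triv_{G^n}}=V$. Then $G^n$ acts trivially on the whole of $V$, so $\rho$ factors through the quotient $W_n/G^n\cong S_n$ via $\pi$. That is, $\rho=\tau\circ\pi$ for a representation $\tau$ of $S_n$ on $V$, and $\tau$ is irreducible because $\pi$ is surjective.

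Such a factorization means $\rho$ is a lift from $S_n$, which contradicts the hypothesis. Alternatively, one can invoke the implication \eqref{enu:restriction=triv}$\Rightarrow$\eqref{enu:lift from Sn} in Corollary~\ref{cor:trivial-color=lift}: by Theorem~\ref{thm:Wn-classification}, $\rho\cong\rho_{\vm}$ for some $\vm$, and the corollary gives $\rho\cong\tilde\tau_{\vm(\triv_G)}$. Either way, $V_{\triv_{G^n}}=\{0\}$.

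None of these steps is a real obstacle. The only point needing care is interpreting ``not a lift'' up to isomorphism, which is why the alternative route through Corollary~\ref{cor:trivial-color=lift} is worth having.
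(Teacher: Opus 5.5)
Your proof is correct and is the paper's argument. The $G^n$-fixed subspace is a $W_n$-subrepresentation by Lemma~\ref{lemma:triv isotypic of G^n comes from lifts}, so by irreducibility it is $\{0\}$ or $V$. The latter means $G^n$ acts trivially, so $\rho$ is a lift, as recorded in Corollary~\ref{cor:trivial-color=lift}. Your direct factorization through $\pi$ gives a literal equality $\rho=\tau\circ\pi$, so the isomorphism subtlety you flag does not actually arise.
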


\subsection{Properties of $\JBG$ and $\LBG$}\label{subsec: average properties}

Let $B\subseteq[n]$. In addition to the subgroups $S_B,W_B\le W_n$ mentioned above, we also define the subgroup
$G^B\le G^n\le W_n$ consisting of the elements of $G^n$ with $e_G$ in the coordinates outside $B$ (this notation is justified as these are given by functions $B\to G$). 
It is straightforward to check that \(W_B= G^B\rtimes S_B\). 

Recall the notation $J_B=\frac{1}{|S_B|}\sum_{\sigma\in S_B}\sigma\in\CSn$ from \eqref{eq:def of J_B}, which we also think of as an element of $\CWn$ via the embedding $S_n\le W_n$. Recall also the notation $\JBG$ from \eqref{eq: def of wreath J_B}.
The following is standard, as $J_B$ and $\JBG$ are averaging operators over the subgroups $S_B$ and $W_B$ of $W_n$, respectively, and every irreducible representation of $W_n$ is unitary as a finite-dimensional representation.

\begin{claim}\label{claim: JB is an orthogonal projection}
Let $(\rho,V)\in\Rep{W_n}$ be a (unitary) representation. Then $\rho(J_B)$ is the orthogonal projection onto $V ^{S_B}=\{v\in V\,\mid\,\rho(\sigma)v=v~~\forall \sigma\in S_B\}$.
Likewise, $\rho(\JBG)$ is the orthogonal projection onto $V^{W_B}=\{v\in V\,\mid\,\rho(w)v=v~~\forall w\in W_B\}$.
\end{claim}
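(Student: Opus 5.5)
This is the standard fact that averaging over a finite subgroup of a unitary representation gives the orthogonal projection onto the invariants. I will prove it once for an arbitrary finite subgroup $H\le W_n$ and the element $P_H:=\frac{1}{|H|}\sum_{h\in H}h\in\CWn$. Both statements then follow by taking $H=S_B$, which gives $P_H=J_B$, and $H=W_B$, which gives $P_H=\JBG$.

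First, fix a $W_n$-invariant inner product on $V$. One can always be obtained by averaging any inner product over $W_n$, so each $\rho(w)$ is unitary and $\rho(w)^*=\rho(w^{-1})$. Write $P:=\rho(P_H)$. The proof then has three steps.

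\textbf{Idempotence.} For $h_0\in H$, reindexing the sum gives $\rho(h_0)P=P$. Averaging this identity over $h_0\in H$ yields $P^2=P$.

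\textbf{Self-adjointness.} Since $h\mapsto h^{-1}$ is a bijection of $H$, we get $P^*=\frac{1}{|H|}\sum_h\rho(h^{-1})=P$. A self-adjoint idempotent is an orthogonal projection onto its image.

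\textbf{Identifying the image.} The relation $\rho(h_0)P=P$ shows that $\mathrm{Im}\,P\subseteq V^H$. Conversely, if $v\in V^H$, then every term in the average equals $v$, so $Pv=v$ and $v\in\mathrm{Im}\,P$. Hence $\mathrm{Im}\,P=V^H$.

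None of these steps is a real obstacle. The only point needing care is that orthogonality refers to the $W_n$-invariant inner product, which is the sense in which the claim calls the representation unitary.
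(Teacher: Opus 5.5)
Your proof is correct and takes essentially the same approach as the paper. The paper states this without proof as the standard fact that averaging over a finite subgroup in a unitary representation gives the orthogonal projection onto the invariants, and your three steps (idempotence, self-adjointness via $h\mapsto h^{-1}$, and image equal to $V^H$), applied with $H=S_B$ and $H=W_B$, are exactly the verification of that fact.
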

For operators $T,S:V\to V$ on an inner product space $\left(V,\langle\cdot\rangle\right)$, we use the notation $T\succeq S$ to indicate that $T-S$ is positive semidefinite, that is, $\langle (T-S)v,v\rangle\ge0$ for every $v\in V$. 

\begin{corollary}\label{cor: LBG is PSD}
    Let $(\rho,V)\in\Rep{W_n}$ be a (unitary) representation. The Laplacian operator \(\rho(\LBG)\) is Hermitian and positive semidefinite, that is, 
    \begin{equation}
    \rho\left(\LBG\right) \succeq 0.
    \end{equation}
\end{corollary}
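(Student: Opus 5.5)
The plan is to derive the statement directly from Claim~\ref{claim: JB is an orthogonal projection}. Fix a unitary representation $(\rho,V)$; every finite-dimensional representation of a finite group can be made unitary by averaging an inner product over the group. For each $B\subseteq[n]$, the claim gives that $P_B:=\rho(\JBG)$ is the orthogonal projection onto $V^{W_B}$.

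If one wants to verify this independently, the argument is standard. Unitarity gives $\rho(w)^*=\rho(w^{-1})$, and $w\mapsto w^{-1}$ is a bijection of $W_B$, so $P_B^*=P_B$. Since $W_B$ is a group, the translation identity $\JBG\cdot\JBG=\JBG$ shows that $P_B^2=P_B$. Hence $P_B$ is a self-adjoint idempotent, that is, an orthogonal projection.

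It follows that $\rho(e-\JBG)=I-P_B$ is also an orthogonal projection (onto $(V^{W_B})^\perp$). In particular it is Hermitian, and for every $v\in V$,
\[
\langle (I-P_B)v,v\rangle=\|(I-P_B)v\|^2\ge 0 .
\]

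Now use linearity of $\rho$ on $\CWn$ to write $\rho(\LBG)=\sum_B c_B\,(I-P_B)$. This is a real linear combination of Hermitian operators, so it is Hermitian. Because all weights satisfy $c_B\ge 0$, we get $\langle\rho(\LBG)v,v\rangle=\sum_B c_B\|(I-P_B)v\|^2\ge0$, so $\rho(\LBG)\succeq0$.

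The only point needing care is the unitarity convention: the statement is phrased for unitary $\rho$, and for a general representation one first conjugates to a unitary one. This does not change the spectrum, which is what is used later (e.g.\ real non-negative eigenvalues in \eqref{eq: lambda min}). The same argument with $W_B$ replaced by $S_B$ also gives the $S_n$ version used in the introduction.
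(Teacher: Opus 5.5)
Your proof is correct and follows the paper's argument. You use Claim~\ref{claim: JB is an orthogonal projection} to see that each $I-\rho(\JBG)$ is an orthogonal projection, hence Hermitian and positive semidefinite, and then conclude from the non-negativity of the weights $c_B$. Your direct check that $\rho(\JBG)$ is a self-adjoint idempotent is a standard supplement that the paper takes as known.
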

\begin{proof}
Recall that $\LBG=\sum_B c_B(e-\JBG)$.
By the claim above, $\rho(\JBG)$ is an orthogonal projection, which is always positive semidefinite as its eigenvalues are in $\set{0,1}$. This makes $\rho(e-\JBG) = I - \rho(\JBG) \succeq 0$ the projection onto its orthogonal complement, which is again positive semidefinite. We are done as the weights $c_B$ are assumed to be non-negative.
\end{proof}

Since $W_B = G^B\rtimes S_B$, giving $|W_B|=|S_B|\cdot|G|^{|B|}$, we also have the second (and final) part of Claim~\ref{claim:triv} by summing over the $|G|^{|B|}$ elements in every fiber of the projection, using Corollary~\ref{cor:trivial-color=lift}.
\begin{corollary}\label{corollary:laplacians-equal-on-lifts}
    Let $\mu\in\Par_n$ and let $\tilde{\tau_\mu}=\tau_\mu\circ\pi$ be the corresponding lift of the $\mu$-indexed irreducible representation of $S_n$ to $W_n$. Then for every weighted hypergraph $\Gamma = ([n],c)$,
    \[
        \tilde{\tau_\mu}\left(\LBG\right)=\tau_\mu\left(\LL_\Gamma\right)
    \]
    as elements of $\End(V)$ for $V$ the shared vector space of $\tau_\mu$ and $\tilde{\tau_\mu}$.
\end{corollary}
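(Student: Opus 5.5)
By linearity, it is enough to show that $\tilde{\tau_\mu}(\JBG)=\tau_\mu(J_B)$ for every $B\subseteq[n]$. Indeed, both Laplacians are the same linear combination $\sum_B c_B(e-\cdot)$ of these averaging elements, and $\tilde{\tau_\mu}(e)=\tau_\mu(e)=\mathrm{id}_V$.

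To compare the averages, I will use the decomposition $W_B=G^B\rtimes S_B$. By \eqref{eq:wreath-fact}, every element of $W_B$ can be written uniquely as $(\textbf{g};e_{S_n})(e_{G^n};\sigma)$ with $\textbf{g}\in G^B$ and $\sigma\in S_B$. One should check that $\textbf{g}\in G^B$ and $\sigma\in S_B$ do land in $W_B$, and conversely that every element of $W_B$ has this form; this is immediate from the matrix description. Consequently, the restriction of $\pi$ to $W_B$ is a surjection onto $S_B$, and each fiber $\pi^{-1}(\sigma)\cap W_B$ has exactly $|G|^{|B|}$ elements, so $|W_B|=|G|^{|B|}\,|S_B|$.

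Now I compute, using $\tilde{\tau_\mu}=\tau_\mu\circ\pi$, which Corollary~\ref{cor:trivial-color=lift} identifies with $\rho_{\vm}$:
\[
\tilde{\tau_\mu}\left(\JBG\right)=\frac{1}{|W_B|}\sum_{w\in W_B}\tau_\mu(\pi(w))=\frac{1}{|G|^{|B|}|S_B|}\sum_{\sigma\in S_B}|G|^{|B|}\,\tau_\mu(\sigma)=\tau_\mu(J_B).
\]
Summing over $B$ with the weights $c_B$ then gives the equality of Laplacians.

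None of these steps is a real obstacle. The only point needing care is the fiber count, namely verifying that $W_B\cap\ker\pi=G^B$ and $\pi(W_B)=S_B$, and this follows directly from the definitions.
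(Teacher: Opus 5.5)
Your proposal is correct and follows essentially the same argument as the paper. It reduces by linearity to $\tilde{\tau_\mu}(\JBG)=\tau_\mu(J_B)$, then uses $W_B=G^B\rtimes S_B$ to see that each fiber of $\pi$ over $\sigma\in S_B$ has exactly $|G|^{|B|}$ elements, and groups the average accordingly.
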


\begin{proof}
It is enough to show that $\tilde{\tau_\mu}(\JBG)=\tau_{\mu}(J_B)$ for every $B\subseteq[n]$.
Fix such $B$. As mentioned above, $W_B=G^B\rtimes S_B$, hence $\pi(W_B)=S_B$ and $|W_B|=|G^B|\cdot |S_B|=|G|^{|B|}\cdot|S_B|$.
Therefore, each $\sigma\in S_B$ has $|\pi^{-1}(\sigma)\cap W_B|=|G|^{|B|}=\frac{|W_B|}{|S_B|}$.
Grouping the sum in $\JBG$ by $\sigma:=\pi(w)\in S_B$,
\begin{align*}
\tilde{\tau_\mu}\left(\JBG\right)
&=\frac{1}{|W_B|}\sum_{\sigma\in S_B}\sum_{\substack{w\in\pi^{-1}\left(\sigma\right)\cap W_B}}\tau_\mu\left(\sigma\right)
=\frac{1}{|S_B|}\sum_{\sigma\in S_B}\tau_\mu\left(\sigma\right)
=\tau_\mu\left(J_B\right).\qedhere
\end{align*}
\end{proof}

Finally, we show that the hyperedge averages $\JBG$ commute with $G^n$,
\begin{lemma}\label{lem:JBG-commutes-Gn}
For every $a\in G^n$ and every $B\subseteq[n]$,
\[
    \left(a;e_{S_n}\right)\,\JBG\,\left(a;e_{S_n}\right)^{-1}\ =\ \JBG.
\]
Consequently, for any (unitary) representation $\rho$, the operator $\rho(\JBG)$ commutes with each element in $\set{\rho\left(\textbf{g}\right):\textbf{g}\in G^n}$.
\end{lemma}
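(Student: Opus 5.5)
The plan is to verify the identity directly in the group algebra. Conjugation by a fixed element $a\in G^n$ permutes $W_n$, and $\JBG$ is the uniform average over $W_B$. So it suffices to show that conjugation by $(a;e_{S_n})$ maps $W_B$ bijectively onto itself, i.e., that $(a;e_{S_n})$ normalizes $W_B$. Then
\[
(a;e_{S_n})\,\JBG\,(a;e_{S_n})^{-1}=\frac{1}{|W_B|}\sum_{g\in W_B}(a;e_{S_n})\,g\,(a;e_{S_n})^{-1}=\frac{1}{|W_B|}\sum_{g'\in W_B}g'=\JBG,
\]
as required.

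To show normalization, I would use $W_B=G^B\rtimes S_B$ and decompose $a=a_B\cdot a_{B^c}$. Here $a_B\in G^B$ agrees with $a$ on coordinates in $B$ and is $e_G$ elsewhere, and $a_{B^c}\in G^{[n]\setminus B}$ is defined analogously. These two elements commute, since they live in disjoint coordinates of the direct product $G^n$. Since $a_B\in W_B$, conjugation by $a_B$ preserves $W_B$ trivially.

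It then remains to see that $a_{B^c}$ commutes with every element of $W_B$, and this is the one step requiring a computation with \eqref{eq:wreath-mul}. It clearly commutes with $G^B$, again by disjointness of coordinates. For $\sigma\in S_B$, the factorization \eqref{eq:wreath-fact} gives $(e;\sigma)(a_{B^c};e)(e;\sigma)^{-1}=(a_{B^c}\circ\sigma^{-1};e)$. Since $\sigma$ fixes every coordinate outside $B$ and maps $B$ to itself, $a_{B^c}\circ\sigma^{-1}=a_{B^c}$. Hence conjugation by $a$ acts on $W_B$ as conjugation by $a_B$, which is an automorphism of $W_B$. An alternative argument avoids the decomposition: in the matrix picture, conjugating a matrix supported on the $B$-minor (with $e_G$ elsewhere on the diagonal) by the diagonal matrix $a$ multiplies row $r$ on the left by $a_r$ and column $c$ on the right by $a_c^{-1}$. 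This keeps the $B$-minor monomial with entries in $G$, and leaves each diagonal entry $(i,i)$ with $i\notin B$ equal to $a_ie_Ga_i^{-1}=e_G$.

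The consequence then follows by applying the algebra homomorphism $\rho$: the relation $\rho(a)\rho(\JBG)\rho(a)^{-1}=\rho(\JBG)$ holds for all $a\in G^n$, i.e., $\rho(\JBG)$ commutes with all of $\rho(G^n)$. The only mild obstacle is keeping the index conventions of \eqref{eq:wreath-mul} straight when $S_B$ acts on $a_{B^c}$, and the invariance of $a_{B^c}$ under permutations supported in $B$ disposes of it.
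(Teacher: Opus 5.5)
Your proposal is correct and follows the paper's argument: conjugation by $(a;e_{S_n})$ normalizes $W_B$, so it permutes the summands of $\JBG$ and leaves the average unchanged. Your decomposition $a=a_Ba_{B^c}$, with $a_{B^c}$ centralizing $W_B$, and your matrix computation both correctly supply the verification that the paper compresses into the remark that conjugation ``does not change the set of coordinates outside $B$''.
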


\begin{proof}
Conjugation by $(a;e_{S_n})$ preserves the subgroup $W_B\le W_n$ (it does not change the set of coordinates outside $B$), hence it is a permutation on $W_B$, and preserves its $\CWn$-average, $\JBG$.
\end{proof}

\section{Proofs of the Main Results}\label{sec:main}
Recall that Theorem~\ref{thm:main} states that for a hypergraph  $\Gamma = ([n],c)$ with non-negative weights, we have 
\[
\ls\left(\Gamma,\regSn\right)=\ls\left(\Gamma,\regWn\right).
\]
The proof follows from the two propositions below.
First we establish an upper bound on the smallest eigenvalue of the lift of the standard representation of $S_n$:
\begin{proposition}\label{prop:star}
    Let $\Gamma=([n],c)$ be a weighted hypergraph. Then
    \[
        \lm\left(\Gamma,\tau_{\left(n-1,1\right)}\right)=\lm\left(\Gamma,\tilde\tau_{\left(n-1,1\right)}\right) \le 
        \min_{i\in[n]}\left(\sum_{B\ni i} c_B\right),
    \]
    and unless there is an almost-isolated vertex in $\Gamma$ (see Page~\pageref{page:almost-isolated}), the inequality is strict.
\end{proposition}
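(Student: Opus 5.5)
The equality $\lm(\Gamma,\tau_{(n-1,1)})=\lm(\Gamma,\tilde\tau_{(n-1,1)})$ is immediate from Corollary~\ref{corollary:laplacians-equal-on-lifts}, since the two operators coincide on the shared space. For the inequality I will work in the permutation representation $\mathbb{C}^n=\mathrm{span}\{e_1,\dots,e_n\}$ of $S_n$, which decomposes as $\triv\oplus\std$ with $\std=\{x:\sum_j x_j=0\}$. By Claim~\ref{claim: JB is an orthogonal projection}, $J_B$ acts on $\mathbb{C}^n$ as the orthogonal projection onto vectors constant on $B$. Hence $(e-J_B)$ replaces the coordinates in $B$ by their deviation from the $B$-average and kills coordinates outside $B$, and
\[
\langle (e-J_B)x,x\rangle=\sum_{j\in B}\Bigl|x_j-\tfrac{1}{|B|}\sum_{k\in B}x_k\Bigr|^2 .
\]

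Fix $i\in[n]$ and use the Rayleigh quotient on the test vector $v=e_i-\frac1n\mathbf{1}\in\std$. Since $\mathbf{1}$ is fixed by every $J_B$, we have $(e-J_B)v=(e-J_B)e_i$. This is $0$ if $i\notin B$, and equals $e_i-\frac{1}{|B|}\mathbf{1}_B$ if $i\in B$. Thus $\langle(e-J_B)v,v\rangle=\langle(e-J_B)e_i,e_i\rangle=1-\frac1{|B|}$ for $i\in B$, while $\|v\|^2=1-\frac1n$. Therefore
\[
\lm(\Gamma,\std)\le \frac{\sum_{B\ni i}c_B\,(1-1/|B|)}{1-1/n}.
\]
This is not obviously $\le\sum_{B\ni i}c_B$: for $|B|<n$ we have $\frac{1-1/|B|}{1-1/n}<1$, but for $B=[n]$ the ratio equals exactly $1$. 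Since $|B|\le n$ always, each term is at most $c_B$. So $\lm\le\sum_{B\ni i}c_B$ for every $i$, and taking the minimum over $i$ gives the bound.

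For strictness, pick $i$ attaining the minimum. The inequality above is strict unless every $B\ni i$ with $c_B>0$ has $|B|=n$ or $|B|\le1$. The case $|B|=1$ is the snag: a singleton contributes $c_B$ to the right-hand side but $0$ to the left, which only helps strictness. So equality forces $c_B=0$ for all $i\in B\subsetneq[n]$ with $|B|\ge2$, while the singleton $\{i\}$ itself gives strict inequality if $c_{\{i\}}>0$. I expect this bookkeeping to be the only delicate step. Handling it via the definition of almost-isolated, if no vertex is almost-isolated then every $i$ has some $i\in B\subsetneq[n]$ with $c_B>0$. In each case, whether $|B|\ge2$ (strict factor $<1$) or $|B|=1$ (contributes $0<c_B$), the Rayleigh bound at that $i$ is strictly less than $\sum_{B\ni i}c_B$. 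Applying this at a minimizing $i$ gives the strict inequality.
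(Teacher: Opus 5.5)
Your proof is correct and follows essentially the paper's argument: the same test vector $e_{i}-\frac1n\mathbf{1}$ in the standard representation and a Rayleigh-quotient bound. The only difference is that you compute the quotient exactly as $\sum_{B\ni i}c_B\frac{1-1/|B|}{1-1/n}$, whereas the paper uses only the projection bound $\langle (I-J_B)v,v\rangle\le\langle v,v\rangle$, with equality iff $v\in(V^{S_B})^\perp$, which fails for $i\in B\subsetneq[n]$. Your intermediate claim that equality is possible when some $B\ni i$ with $c_B>0$ has $|B|\le1$ is a slip, since that ratio is $0<1$. You correct it in the next sentence, and your final strictness argument is right.
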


\noindent Then we show that this upper bound is a lower bound on the eigenvalues of any irreducible representation outside $\Irr_0(W_n)$. By Corollary~\ref{cor:trivial-color=lift}, these are the irreducible representations which are not lifts of an irreducible representation of $S_n$.

\begin{proposition}\label{prop:gap from triv diagonal}
    Let $\Gamma = ([n],c)$ be a weighted hypergraph and let $\rho:W_n\to GL(V)$ be an irreducible representation of $W_n$ which is not a lift of an irreducible representation of $S_n$. Then 
    \[ 
        \min_{i\in[n]}\left(\sum_{B\ni i} c_B\right) \le 
        \lm\left(\Gamma,\rho\right).
    \]
\end{proposition}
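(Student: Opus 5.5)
We prove Proposition~\ref{prop:gap from triv diagonal} by decomposing $V$ into joint eigenspaces for $G^n$ and showing that, on each, every hyperedge containing one fixed coordinate contributes fully to the Laplacian. Since $\rho$ is irreducible and not a lift, Corollary~\ref{cor:non lift irreps have no G^n trivial} says that $V^{G^n}=\{0\}$. Restricting to $G^n$ as in Lemma~\ref{lem:restriction-Gn-type}, we write
\[
V=\bigoplus_{\theta}V_\theta,\qquad \theta=\boxtimes_{i=1}^n\theta_i\in\Irr(G^n),
\]
where every $\theta$ that appears has some coordinate $\theta_i\neq\triv_G$. By Lemma~\ref{lem:JBG-commutes-Gn}, each $\rho(\JBG)$ commutes with $\rho(G^n)$, so it preserves every isotypic component $V_\theta$. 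Hence $\rho(\LBG)$ is block diagonal with respect to $V=\bigoplus_\theta V_\theta$. It therefore suffices to show that on each block
\[
\rho\left(\LBG\right)\big|_{V_\theta}\succeq \sum_{B\ni i}c_B\cdot I
\]
for some index $i=i(\theta)$. The right-hand side is at least $\min_{i}\sum_{B\ni i}c_B$, which gives the claim.

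Fix $\theta$ and choose $i$ with $\theta_i\neq\triv_G$. The key step is that for every $B\ni i$ we have $\rho(\JBG)|_{V_\theta}=0$. By Claim~\ref{claim: JB is an orthogonal projection}, $\rho(\JBG)$ is the orthogonal projection onto $V^{W_B}$. Any $v\in V^{W_B}$ is in particular fixed by the copy of $G$ sitting in coordinate $i$, since that copy lies in $G^B\le W_B$. On $V_\theta$, however, this copy of $G$ acts through $\dim$-many copies of $\theta_i$, which is a non-trivial irreducible representation and so has no nonzero fixed vectors. Now $\rho(\JBG)$ preserves $V_\theta$, so $\rho(\JBG)V_\theta\subseteq V^{W_B}\cap V_\theta$. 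A vector in this intersection is fixed by $G_{\{i\}}$ and lies in $V_\theta$, so it must be $0$. Therefore $\rho(e-\JBG)$ acts as the identity on $V_\theta$ for each $B\ni i$.

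For the hyperedges $B\not\ni i$, Corollary~\ref{cor: LBG is PSD} gives $c_B\,\rho(e-\JBG)\succeq0$. Summing over all $B$, we get on $V_\theta$
\[
\rho\left(\LBG\right)\big|_{V_\theta}=\sum_{B\ni i}c_B\cdot I+\sum_{B\not\ni i}c_B\,\rho\left(e-\JBG\right)\big|_{V_\theta}\succeq \sum_{B\ni i}c_B\cdot I,
\]
which is the bound needed on this block. This finishes the argument.

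The only delicate point is that $\rho(\JBG)$ preserves $V_\theta$, so that the positive semidefinite bound can be taken block by block; Lemma~\ref{lem:JBG-commutes-Gn} supplies exactly this.
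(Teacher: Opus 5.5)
Your proof is correct and follows essentially the same route as the paper. You block-diagonalize $\rho(\LBG)$ along the $G^n$-isotypic components $V_\theta$, all of which have a non-trivial coordinate by Corollary~\ref{cor:non lift irreps have no G^n trivial}; you show that $\rho(\JBG)$ vanishes on $V_\theta$ whenever $B$ contains a non-trivial coordinate, and you discard the remaining terms because they are positive semidefinite. The differences are cosmetic: you get the vanishing from the fixed-vector description of the projection onto $V^{W_B}$ rather than from the factorization and Schur's lemma in Lemma~\ref{lem:J-on-theta}, and you bound the other terms by positive semidefiniteness instead of identifying them with $I-\rho(J_B)$.
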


\begin{proof}[Proof of Theorem \ref{thm:main} assuming Propositions~\ref{prop:star} and \ref{prop:gap from triv diagonal}]
    Let $\rho$ be any irreducible representation of $W_n$ which is not a lift of an irreducible representation of $S_n$. Then,
    \[
        \lm\left(\Gamma,\tau_{(n-1,1)}\right) \stackrel{\mathrm{Prop.~}\ref{prop:star}}{\le} 
        \min_{i\in[n]}\left(\sum_{B\ni i} c_B\right)\stackrel{\mathrm{Prop.~}\ref{prop:gap from triv diagonal}}{\le} \lm\left(\Gamma,\rho\right).
    \]
    Therefore, the smallest non-trivial eigenvalue of the Laplacian of $\Gamma$ in the regular representation is obtained among the irreducible representations in $\Irr_0(W_n)$:
    \[
         \ls\left(\Gamma,\regWn\right) = 
         \min_{\triv_{W_n}\ne\rho\in\Irr_0(W_n)}\lm\left(\Gamma,\rho\right) \stackrel{\mathrm{Cor.~}\ref{corollary:laplacians-equal-on-lifts}}{=}
         \min_{\triv_{S_n}\ne\tau\in\Irr\left(S_n\right)}\lm\left(\Gamma,\tau\right) =
         \ls\left(\Gamma,\regSn\right),
    \]    
    and by Proposition~\ref{prop:star}, unless there is an almost-isolated vertex, the inequality is strict - for every $\rho\in\Irr(W_n)\setminus\Irr_0(W_n)$,
    \[
    \lambda_{\min}\left(\Gamma,\rho\right)>\lambda_{\min}^*\left(\Gamma,\regWn\right).
    \]
\end{proof}

\subsection{An upper bound on the smallest eigenvalue of $\tilde\tau_{(n-1,1)}$}
In this subsection, we prove an upper bound on the smallest eigenvalue of the ($S_n$ standard) representation $\tilde\tau_{(n-1,1)}$ on any weighted hypergraph $\Gamma$.

\begin{proof}[Proof of Proposition~\ref{prop:star}]
Using Corollary~\ref{corollary:laplacians-equal-on-lifts}, we have the equality $\lm\left(\Gamma,\tau_{(n-1,1)}\right)=\lm\left(\Gamma,\tilde\tau_{(n-1,1)}\right)$, so it suffices to consider $\tau_{(n-1,1)}$.

We work in the usual model of the standard $S_n$--representation defined by $(n-1,1)$,
\[
    V\ :=\ \left\{x=\left(x_1,\ldots,x_n\right)\in\mathbb{C}^n:\ \sum_{i=1}^n x_i=0\right\},
\]
with $S_n$ acting by permuting coordinates. This realizes $\tau_{(n-1,1)}$ as a unitary representation with respect to the standard Hermitian inner product on $\mathbb{C}^n$ restricted to $V$.

Fix $i_0\in[n]$ and set
\[
    \vio\ :=\ e_{i_0}-\frac{1}{n}\sum_{j=1}^n e_j\ \in\ V,
\]
where $(e_1,\ldots,e_n)$ is the standard basis of $\mathbb{C}^n$.

For each $B\subseteq[n]$, the operator $I-\tau_{(n-1,1)}(J_B)\in\mathrm{End}(V)$ is the orthogonal projection onto $(V^{S_B})^\perp$ (see Claim~\ref{claim: JB is an orthogonal projection}), hence
\begin{equation}\label{eq:proj-bound}
    0\ \le\ \left\langle \left(I-\tau_{\left(n-1,1\right)}\left(J_B\right)\right)v, v\right\rangle\ \le\ \langle v,v\rangle
    \qquad\forall v\in V,
\end{equation}
where equality on the right occurs if and only if $v\in (V^{S_B})^{\perp}$.
If $i_0\notin B$, then $\vio$ is constant on the coordinates in $B$, so it is fixed by $S_B$, namely, $\vio\in V^{S_B}$. Hence,
\begin{equation}\label{eq:zero-io}
    \left(I-\tau_{\left(n-1,1\right)}\left(J_B\right)\right)\vio=0
    \qquad\text{whenever } i_0\notin B.
\end{equation}
Note that $V^{S_B}$ is the subspace where the $B$-coordinates are all equal, that is,
\[
V^{S_B}=\set{v\in V:\exists c~~\mathrm{s.t.}~\forall k\in B,~\langle v, e_k\rangle=c}.
\]
Consequently, 
\[
\left(V^{S_B}\right)^{\perp}=\set{v\in V:\sum_{k\in B}\langle v,e_k\rangle = 0 ~~ \mathrm{and}~~\forall l\notin B,~\langle v,e_l\rangle = 0}.
\]
Thus, $\vio\notin (V^{S_B})^{\perp}$ whenever $i_0\in B\neq[n]$, and therefore in those cases,
\begin{equation}\label{eq:vio strict ineq}
\langle \left(I-\tau_{\left(n-1,1\right)}\left(J_B\right)\right)\vio, \vio \rangle < \langle\vio,\vio\rangle
\end{equation} 
for $i_0\in B$. So
\begin{align*}
\left\langle \tau_{\left(n-1,1\right)}\left(\LL_\Gamma\right)\vio,\ \vio\right\rangle
&=\sum_{B\subseteq[n]}c_B\left\langle (I-\tau_{(n-1,1)}(J_B))\vio,\ \vio\right\rangle \\
&=\sum_{B\ni i_0}c_B\left\langle (I-\tau_{(n-1,1)}(J_B))\vio,\ \vio\right\rangle \\
&\le \sum_{B\ni i_0}c_B\,\langle \vio,\vio\rangle,
\end{align*}
where the second equality is due to~\eqref{eq:zero-io} and the last inequality uses \eqref{eq:proj-bound}. The last inequality is strict whenever there is a non-zero weight for some $B\subsetneq[n]$ containing $i_0$ by \eqref{eq:vio strict ineq}. 
Dividing by $\langle \vio,\vio\rangle>0$ gives a Rayleigh quotient bound
\[
    \frac{\left\langle \tau_{(n-1,1)}(\LL_\Gamma)\vio,\ \vio\right\rangle}{\langle \vio,\vio\rangle}
    \ \le\ \sum_{B\ni i_0}c_B.
\]
Since $\tau_{(n-1,1)}(\LL_\Gamma)$ is Hermitian and positive semidefinite, its minimal eigenvalue is the minimum of the Rayleigh quotients over non-zero vectors in $V$, hence
\[
    \lm\!\left(\Gamma,\tau_{(n-1,1)}\right)\ \le\ \sum_{B\ni i_0}c_B.
\]
Taking $i_0$ that attains the minimum value, and assuming that it is not almost-isolated, we get a strict inequality
\[
    \lm\!\left(\Gamma,\tau_{(n-1,1)}\right)\ <\ \sum_{B\ni i_0}c_B=\min_i\sum_{B\ni i}c_B.\qedhere
\]
\end{proof}

\subsection{A lower bound on the smallest eigenvalue of  $\rho\in\Irr (W_n)\setminus \Irro(W_n)$}

We now prove Proposition~\ref{prop:gap from triv diagonal}. 
Let $\rho:W_n\to GL(V)$ be a $W_n$-representation. 
As in Lemma~\ref{lemma:triv isotypic of G^n comes from lifts}, consider its restriction to $G^n$, and let the isotypic decomposition of $\Res_{G^n}^{W_n}\rho$ to subspaces be
\[
V=\bigoplus_{\theta\in\Irr\left(G^n\right)} V_\theta.
\]
Since $\rho$ is finite-dimensional (hence unitary), this is an orthogonal direct sum.

For $\theta=\theta_1\boxtimes\cdots\boxtimes\theta_n\in\Irr(G^n)$ define
\[
\supp\left(\theta\right)\ :=\ \{i\in[n]:\theta_i\not\simeq \triv_G\}.
\]

We view $\CSn$ as a subalgebra of $\CWn$ via the embedding
$S_n\hookrightarrow W_n$, $\sigma\mapsto (e_{G^n};\sigma)$.
In particular, for $B\subseteq[n]$, the element $J_B\in\CSn$ from \eqref{eq:def of J_B}
is identified with the same average in $\CWn$:
\[
J_B\ =\ \frac{1}{|S_B|}\sum_{\sigma\in S_B}\left(e_{G^n};\sigma\right)\ \in\ \CWn.
\]

We start by showing that $\supp(\theta)$ determines how $\JBG$ acts on $V_\theta$.
\begin{lemma}\label{lem:J-on-theta}
For every $\theta\in\Irr(G^n)$ and $B\subseteq[n]$,
\[
\rho\left(\JBG\right)\Big|_{V_\theta}\ =\
\begin{cases}
\rho\left(J_B\right)\big|_{V_\theta} & \text{if } B\cap \supp\left(\theta\right)=\emptyset,\\[2pt]
0 & \text{if } B\cap \supp\left(\theta\right)\neq\emptyset.
\end{cases}
\]
\end{lemma}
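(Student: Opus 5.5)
The plan is to factor the averaging element $\JBG$ through the semidirect decomposition $W_B=G^B\rtimes S_B$. Every element of $W_B$ can be written uniquely as $(\textbf{g};e_{S_n})(e_{G^n};\sigma)$ with $\textbf{g}\in G^B$ and $\sigma\in S_B$, by \eqref{eq:wreath-fact}. Averaging over $W_B$ therefore factors as
\[
\JBG \;=\; P_B\, J_B,\qquad P_B:=\frac{1}{|G^B|}\sum_{\textbf{g}\in G^B}\left(\textbf{g};e_{S_n}\right)\in\CWn .
\]
Since $G^B$ is normalized by $S_B$ (conjugation by $\sigma\in S_B$ permutes the coordinates inside $B$), we also have $J_B P_B = P_B J_B$. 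Hence $\JBG = J_B P_B$ as well. The problem thus reduces to understanding how $\rho(P_B)$ acts on each isotypic component $V_\theta$.

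First I compute $\rho(P_B)|_{V_\theta}$. The element $P_B$ is the averaging element of the subgroup $G^B\le G^n$. On $V_\theta$, the group $G^n$ acts as a multiple of $\theta=\theta_1\boxtimes\cdots\boxtimes\theta_n$. The restriction of $\theta$ to $G^B$ is $\left(\boxtimes_{i\in B}\theta_i\right)$ tensored with the trivial action on the remaining factors, so it is a multiple of the irreducible $G^B$-representation $\boxtimes_{i\in B}\theta_i$. The average over $G^B$ is the projection onto $G^B$-invariants. This is the identity if every $\theta_i$ with $i\in B$ is trivial, that is, if $B\cap\supp(\theta)=\emptyset$. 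Otherwise $\boxtimes_{i\in B}\theta_i$ is a nontrivial irreducible representation with no invariants, and the projection is $0$. So $\rho(P_B)|_{V_\theta}$ is the identity in the first case and $0$ in the second.

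Next I combine this with the factorization. In the second case, $\rho(\JBG)v=\rho(J_B)\rho(P_B)v=0$ for $v\in V_\theta$, using $\JBG=J_BP_B$. In the first case, $\rho(\JBG)v=\rho(J_B)\rho(P_B)v=\rho(J_B)v$. In both cases the order $\JBG=J_BP_B$ is what makes the argument work, because $\rho(P_B)$ is applied to $v$ first and we only need to know how $P_B$ acts on $V_\theta$. We never need $\rho(J_B)$ to preserve $V_\theta$; it need not, since $S_B$ moves $V_\theta$ to $V_{\sigma.\theta}$.

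The only point needing care is the commutation $P_BJ_B=J_BP_B$. It follows from the identity $(e;\sigma)(\textbf{g};e)(e;\sigma)^{-1}=(\textbf{g}\circ\sigma^{-1};e)$ given by \eqref{eq:wreath-mul}: for $\sigma\in S_B$, the map $\textbf{g}\mapsto\textbf{g}\circ\sigma^{-1}$ is a bijection of $G^B$, so $(e;\sigma)P_B(e;\sigma)^{-1}=P_B$. Alternatively, one can skip the commutation entirely and write each element of $W_B$ directly in the form $(e;\sigma)(\textbf{g}';e)$ with $\textbf{g}'=\textbf{g}\circ\sigma\in G^B$, again using \eqref{eq:wreath-fact}. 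This gives $\JBG=J_BP_B$ without further argument.
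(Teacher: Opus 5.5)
Your proposal is correct and follows essentially the same route as the paper. You write $(\textbf{g};\sigma)=(e_{G^n};\sigma)(\textbf{g}\circ\sigma;e_{S_n})$ to obtain $\JBG=J_B P_B$, with the $G^B$-average $P_B$ acting first, and then observe that $\rho(P_B)$ is the identity or zero on $V_\theta$ according to whether $B\cap\supp(\theta)$ is empty. Your initial detour through $\JBG=P_BJ_B$ plus the commutation $P_BJ_B=J_BP_B$ is valid but unnecessary, as you yourself note.
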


\begin{proof}
Fix $B\subseteq[n]$ and recall that $W_B=G^B\rtimes S_B$, so $|W_B|=|G|^{|B|}\,|S_B|$. Using
\eqref{eq:wreath-fact}, for $g\in G^B$ and $\sigma\in S_B$,
\[
\left(g;\sigma\right)=\left(e_{G^n};\sigma\right)\,\left(g\circ\sigma;e_{S_n}\right),
\]
hence
\[
\JBG
=\frac{1}{|W_B|}\sum_{\sigma\in S_B}\sum_{g\in G^B}\left(e_{G^n};\sigma\right)\,\left(g\circ\sigma;e_{S_n}\right)
=\frac{1}{|S_B|}\sum_{\sigma\in S_B}\left(e_{G^n};\sigma)\cdot
\left(\frac{1}{|G|^{|B|}}\sum_{g\in G^B}(g;e_{S_n}\right)\right),
\]
where we used the fact that $g\mapsto g\circ\sigma$ is a bijection of $G^B$ for every fixed $\sigma\in S_B$.

On the $G^n$--isotypic component $V_\theta$, the restriction of $\rho$ to $G^n$ is a direct sum of copies of
$\theta=\theta_1\boxtimes\cdots\boxtimes\theta_n$, so the operator
$\frac{1}{|G|^{|B|}}\sum_{g\in G^B}\rho\left(g;e_{S_n}\right)$ acts as $\frac{1}{|G|}\sum_{h\in G}\theta_i(h)$ on coordinates $i\in B$ and as the identity on coordinates outside of $B$.
By Schur's lemma,
$\frac{1}{|G|}\sum_{h\in G}\theta_i(h)$ is the identity if $\theta_i\simeq\triv_G$, and zero otherwise.
Therefore, if $B\cap \supp(\theta)=\emptyset$, the operator corresponding to this $G^B$--average is the identity on $V_\theta$ and

\[
\rho\left(\JBG\right)\Big|_{V_\theta}
=\frac{1}{|S_B|}\sum_{\sigma\in S_B}\left(\rho\left(e_{G^n};\sigma\right) \cdot I\right)\big|_{V_\theta}
=\rho(J_B)\big|_{V_\theta}.
\]

On the other hand,  if $B\cap \supp(\theta)\neq\emptyset$, there is a tensor coordinate that maps to $0$ and the $G^B$--average vanishes. Hence $\rho(\JBG)\big|_{V_\theta}=0$.
\end{proof}

\begin{corollary}\label{cor:L-preserves-blocks}
Let $\rho:W_n\to GL(V)$ be an irreducible $W_n$-representation and write the $G^n$--isotypic decomposition $V=\bigoplus_{\theta\in\Irr(G^n)} V_\theta$.
Then for any hypergraph $\Gamma$, each $V_\theta$ is $\rho(\LBG)$--invariant and
\[
\operatorname{spec}\!\left(\rho\left(\LBG\right)\right)\ =\ \bigsqcup_{\theta\in\Irr\left(G^n\right)}
\operatorname{spec}\!\left(\rho\left(\LBG\right)\Big|_{V_\theta}\right)
\]
(with multiplicity), where $\operatorname{spec}\left(T\right)$ denotes the complex spectrum of an operator $T$.
\end{corollary}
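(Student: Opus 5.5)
The plan is to use Lemma~\ref{lem:JBG-commutes-Gn} rather than Lemma~\ref{lem:J-on-theta}. The reason is that $\rho(J_B)$ by itself need not preserve $V_\theta$, since $S_B$ permutes coordinates and maps $V_\theta$ to $V_{\sigma.\theta}$. Invariance should therefore come from commutation with $G^n$, not from the explicit formula.

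\emph{Step 1: invariance.} The isotypic projection onto $V_\theta$ is given by the standard central idempotent formula
\[
P_\theta=\frac{\dim\theta}{|G^n|}\sum_{a\in G^n}\overline{\chi_\theta(a)}\,\rho\left(a;e_{S_n}\right),
\]
which is a linear combination of the operators $\rho(a;e_{S_n})$ with $a\in G^n$. By Lemma~\ref{lem:JBG-commutes-Gn}, each $\rho(\JBG)$ commutes with every $\rho(a;e_{S_n})$, and hence with $P_\theta$. The identity also commutes with $P_\theta$. So $\rho(\LBG)=\sum_B c_B\,(I-\rho(\JBG))$ commutes with $P_\theta$. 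An operator $T$ commuting with a projection $P$ preserves its image, because $TPv=PTv$. Therefore each $V_\theta=P_\theta V$ is $\rho(\LBG)$-invariant.

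\emph{Step 2: spectrum.} The decomposition $V=\bigoplus_\theta V_\theta$ is a direct sum, and orthogonal by unitarity. With respect to a basis adapted to this decomposition, $\rho(\LBG)$ is block diagonal, with diagonal blocks $\rho(\LBG)|_{V_\theta}$. The characteristic polynomial of a block-diagonal matrix is the product of the characteristic polynomials of its blocks. So the spectrum of $\rho(\LBG)$, counted with multiplicity, is the disjoint union of the spectra of the restrictions. Irreducibility of $\rho$ is not needed.

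Neither step involves a real obstacle. The only point requiring care is Step~1: one must argue invariance through the commutation with $G^n$, since the individual summand $\rho(J_B)$ does not preserve $V_\theta$.
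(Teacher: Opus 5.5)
Your proof is correct and follows the paper's argument: the paper also writes the isotypic projection $P_\theta$ as a linear combination of elements of $G^n$, uses Lemma~\ref{lem:JBG-commutes-Gn} to get that $\rho(\JBG)$ commutes with $\rho(P_\theta)$ and hence preserves $V_\theta=\rho(P_\theta)V$, and then reads off the spectrum from the resulting block decomposition. You are also right that irreducibility of $\rho$ is never used; the paper's proof does not use it either.
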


\begin{proof}
Assume that $\vm$ is the multi-partition such that $\rho=\rho_{\vm}$.
It suffices to show that for every $B\subseteq[n]$ and every $\theta\in\Irr(G^n)$, $\rho(\JBG)$ preserves $V_\theta$. Indeed, by \eqref{eq: def of wreath laplacian}, this would imply that $\rho(\LBG)$ preserves $V_\theta$ as well, and since the decomposition into $G^n$--isotypic components is orthogonal, the spectrum of $\rho(\LBG)$ is the disjoint union of the spectra of its restrictions to the blocks $V_\theta$.

Fix $B\subseteq[n]$ and $\theta\in\Irr(G^n)$. 
Taking $P_\theta = \frac{\dim\theta}{|G^n|}\sum_{g\in G^n}\chi_\theta(g^{-1})g\in \CWn$ -- the element projecting onto the $\theta$-isotypic part of every $G^n$-representation -- we have that $\rho\left(P_\theta\right) V = V_\theta$ and by Lemma~\ref{lem:JBG-commutes-Gn}, $ \JBG P_\theta  = P_\theta \JBG$. Hence 
$$\rho\left(\JBG\right) V_\theta = \rho\left(\JBG\right) \rho\left(P_\theta\right) V= \rho\left(P_\theta\right) \rho\left(\JBG\right) V \subseteq V_\theta.$$ 
\end{proof}

Finally, we prove the main proposition.

\begin{proof}[Proof of Proposition~\ref{prop:gap from triv diagonal}]
    Let $\rho\in\Irr(W_n) \setminus \Irro(W_n)$, let $\vm$ be the multi-partition corresponding to $\rho$, and let $V= \bigoplus_{\theta \in \Irr(G^n)} V_\theta$ be the decomposition as above. By Corollary~\ref{cor:non lift irreps have no G^n trivial}, as $\rho$ is not a lift of an irreducible representation of $S_n$, its $G^n$-trivial part is ${0}$. Namely, for every $\theta\in\Irr(G^n)$ with $V_\theta\ne0$,  $\supp(\theta)\neq\emptyset$. By Corollary~\ref{cor:L-preserves-blocks}, it suffices to show that 
    \[
        \lm\left(\rho\left(\LBG\right)\Big|_{V_\theta}\right) \geq \min_{i\in[n]}\sum_{B\ni i}c_B
    \]
    for every $\theta\in\Irr(G^n)$ with $\supp(\theta)\neq\emptyset$.
    Indeed, 
    \begin{eqnarray*}
        \rho\left(\LBG\right)\Big|_{V_\theta} &=& \sum_{B\subseteq [n]} c_B\left(I-\rho\left(\JBG\right)\Big|_{V_\theta}\right) \\ 
        &\stackrel{Lemma~\ref{lem:J-on-theta}}{=}& \sum_{B\cap\supp\left(\theta\right) =\emptyset}c_B\cdot\left(I-\rho\left(J_B\right)\mid_{V_\theta}\right) + \sum_{B\cap\supp\left(\theta\right)\ne\emptyset} c_B\cdot I \\
        &\stackrel{Claim~\ref{claim: JB is an orthogonal projection}}{\succeq}& \sum_{B\cap\supp\left(\theta\right)\ne\emptyset} c_B\cdot I.
    \end{eqnarray*}    
    We are done as for every $i_0\in\supp(\theta)\neq\emptyset$, 
    \[
        \sum_{B\cap\supp\left(\theta\right)\ne\emptyset} c_B \ge 
        \sum_{B\ni i_0} c_B \ge 
        \min_{i\in[n]}\sum_{B\ni i} c_B.\qedhere
    \]
\end{proof}

\bibliographystyle{alpha}
\bibliography{bib}

@article{AldousCaputoDurrettHolroydJungPuha2021Liggett,
  author  = {Aldous, D. and Caputo, P. and Durrett, R. and Holroyd, A. E. and Jung, P. and Puha, A. L.},
  title   = {The Life and Mathematical Legacy of {T}homas {M}. {L}iggett},
  journal = {Notices Amer. Math. Soc.},
  volume  = {68},
  number  = {1},
  pages   = {67--79},
  year    = {2021},
  doi     = {10.1090/noti2203},
  eprint  = {2008.03137},
  archivePrefix = {arXiv},
  primaryClass  = {math.PR},
}

@unpublished{AlonGhosh25,
  author        = {G. Alon and S. Ghosh},
  title         = {Spectral gap for the signed interchange process with arbitrary sets},
  note       = {preprint arXiv:2510.04244},
  year          = {2025}
}

@article{ALON_KOZMA_PUDER_2025, 
    title={On the {A}ldous-{C}aputo Spectral Gap Conjecture for Hypergraphs}, 
    volume={179}, 
    DOI={10.1017/S0305004125000179}, 
    number={2}, 
    journal={Math. Proc. Cambridge Philos. Soc.}, 
    author={Alon, G. and Kozma, G. and Puder, D.}, 
    year={2025}, 
    pages={259–298}
}

@unpublished{AP26,
  author  = {G. Alon and D. Puder},
  title   = {Aldous-type Spectral Gaps in Unitary Groups},
  note    = {Preprint, arXiv:2603.00353},
  year    = {2026}
}

@inproceedings{bristiel2024entropy,
  title={Entropy inequalities for random walks and permutations},
  author={Bristiel, A. and Caputo, P.},
  booktitle={Ann. Inst. Henri Poincar{\'e} Probab. Stat.},
  volume={60},
  pages={54--81},
  year={2024}
}

@article{CLR10,
  author  = {Caputo, P. and Liggett, T. M. and Richthammer, T.},
  title   = {Proof of {A}ldous' spectral gap conjecture},
  journal = {J. Amer. Math. Soc.},
  year    = {2010},
  volume  = {23},
  number  = {3},
  pages   = {831--851},
  doi     = {10.1090/S0894-0347-10-00659-4}
}

@article{Cesi2016Octopus,
  author  = {Cesi, F.},
  title   = {A Few Remarks on the {O}ctopus Inequality and {A}ldous' Spectral Gap Conjecture},
  journal = {Comm. Algebra},
  volume  = {44},
  number  = {1},
  pages   = {279--302},
  year    = {2016},
  doi     = {10.1080/00927872.2014.975349},
  eprint  = {1310.6156},
  archivePrefix = {arXiv},
  primaryClass  = {math.PR},
}

@article{CesiWeylB,
  author    = {F. Cesi},
  title     = {On the spectral gap of some {C}ayley graphs on the {W}eyl group ${W(B_n)}$},
  journal   = {Linear Algebra Appl.},
  volume    = {586},
  pages     = {274--295},
  year      = {2020},
  doi       = {10.1016/j.laa.2019.10.038},
  eprint    = {1807.11833},
  archivePrefix = {arXiv},
  primaryClass  = {math.CO}
}

@book{ceccherini2014representation,
  title={Representation theory and harmonic analysis of wreath products of finite groups},
  author={Ceccherini-Silberstein, T. and Scarabotti, F. and Tolli, F.},
  volume={410},
  year={2014},
  publisher={Cambridge University Press}
}

@article{Ghosh23,
  author  = {Ghosh, S.},
  title   = {Aldous-type spectral gap results for the complete monomial group},
  journal = {Ann. Inst. Henri Poincaré Probab. Stat.},
  year    = {2023},
  note  = {to appear, also at arXiv:2309.12154}
}

@article{PP18,
  author  = {Parzanchevski, O. and Puder, D.},
  title   = {Aldous's spectral gap conjecture for normal sets},
  journal = {Trans. Amer. Math. Soc.},
  year    = {2020},
  volume  = {373},
  number  = {10},
  pages   = {7067--7086},
  doi     = {10.1090/tran/8155}
}

@mastersthesis{Piras2010,
  author  = {Piras, D.},
  title   = {Generalizations of {A}ldous' Spectral Gap Conjecture},
  school  = {Universit\`a degli Studi Roma Tre},
  year    = {2010},
  type    = {Tesi di Laurea},
  note    = {Advisor: P. Caputo},
  url     = {https://www.mat.uniroma3.it/scuola_orientamento/alumni/laureati/piras/sintesi.pdf},
}

@book{Zelevinsky81,
  author    = {Zelevinsky, A. V.},
  title     = {Representations of Finite Classical Groups: A Hopf Algebra Approach},
  series    = {Lecture Notes in Mathematics},
  volume    = {869},
  publisher = {Springer},
  year      = {1981}
}

~\\
Niv Levhari\\
School of Mathematical Sciences, Tel Aviv University, Tel Aviv, 6997801, Israel\\
\noindent\texttt{nlevhari@gmail.com}\\
\\
Doron Puder\\
School of Mathematical Sciences, Tel Aviv University, Tel Aviv, 6997801, Israel\\
and IAS Princeton, School of Mathematics, 1 Einstein Drive, Princeton, NJ 08540, USA\\
\texttt{doronpuder@gmail.com}~\\

\end{document}